\numberwithin{equation}{section}
\theoremstyle{plain}
\newtheorem{thm}{Theorem}[section]
\newtheorem{prop}[thm]{Proposition}
\newtheorem{lem}[thm]{Lemma}
\newtheorem{cor}[thm]{Corollary}
\theoremstyle{definition}
\newtheorem{rem}[thm]{Remark}
\newcommand{\ichi}{\mathbf{1}}
\newcommand{\R}{\mathbb{R}}
\newcommand{\Z}{\mathbb{Z}}
\newcommand{\N}{\mathbb{N}}
\newcommand{\calA}{\mathcal{A}}
\newcommand{\calF}{\mathcal{F}}
\newcommand{\calS}{\mathcal{S}}
\begin{document}

\title[Limited smoothness conditions 
for bilinear Fourier multipliers]
{Limited smoothness conditions with 
mixed norms for bilinear Fourier multipliers}

\author[A. Miyachi]{Akihiko Miyachi}
\author[N. Shida]{Naoto Shida}
\author[N. Tomita]{Naohito Tomita}

\address[A. Miyachi]
{Department of Mathematics, 
Tokyo Woman's Christian University, 
Zempukuji, Suginami-ku, Tokyo 167-8585, Japan}
\address[N. Shida and N. Tomita]
{Department of Mathematics, 
Graduate School of Science, Osaka University, 
Toyonaka, Osaka 560-0043, Japan}

\email[A. Miyachi]{miyachi@lab.twcu.ac.jp}
\email[N. Shida]{u331453f@ecs.osaka-u.ac.jp}
\email[N. Tomita]{tomita@math.sci.osaka-u.ac.jp}

\date{\today}

\keywords{Bilinear Fourier multipliers,
H\"ormander multiplier theorem, $BMO$, Hardy spaces}

\thanks{This work was partially supported
by JSPS KAKENHI Grant Numbers 
JP16H03943 (Miyachi) and JP16K05201 (Tomita).}

\subjclass[2010]{42B15, 42B25, 42B30}

%%%==================================================================
\begin{abstract}
In this paper,
the $L^2 \times L^{\infty} \to L^2$ and
$L^2 \times L^2 \to L^1$ boundedness
of bilinear Fourier multiplier operators is discussed
under weak smoothness conditions on multipliers.
As an application,
we prove the $L^2 \times BMO \to L^2$ and
$L^2 \times L^2 \to H^1$ boundedness
of bilinear operators with
multipliers of limited smoothness satisfying vanishing conditions.
\end{abstract}

%%%==================================================================
\maketitle

%%%==================================================================
%%%==================================================================
\section{Introduction}\label{section1}
For $m(\xi_1,\xi_2) \in L^{\infty}(\R^n \times \R^n)$,
the bilinear Fourier multiplier operator $T_m$ is defined by
\[
T_m(f_1,f_2)(x)
=\frac{1}{(2\pi)^{2n}}
\int_{(\R^n)^2}
e^{ix \cdot (\xi_1+\xi_2)}
m(\xi_1,\xi_2)\widehat{f_1}(\xi_1)\widehat{f_2}(\xi_2)\,
d\xi_1 d\xi_2
\]
for $f_1,f_2 \in \calS(\R^n)$.
In the framework of multipliers which are smooth away from the origin,
it is well known that if $m$ satisfies
\begin{equation}\label{CM-multiplier}
|\partial^{\alpha_1}_{\xi_1}\partial^{\alpha_2}_{\xi_2}
m(\xi_1,\xi_2)|
\le C_{\alpha_1,\alpha_2}(|\xi_1|+|\xi_2|)^{-(|\alpha_1|+|\alpha_2|)},
\quad (\xi_1,\xi_2) \neq (0,0),
\end{equation}
for sufficiently many multi-indices
$\alpha_1,\alpha_2 \in \N_0^n=\{0,1,2,\dots\}^n$,
then the corresponding bilinear operator $T_m$
is bounded from $L^{p_1} \times L^{p_2}$ to $L^p$
for $1 \le p_1,p_2 \le \infty$ satisfying
$1/p_1+1/p_2=1/p$,
where $L^p$ is replaced by the weak $L^p$ space
if $p_1=1$ or $p_2=1$,
and by $BMO$ if $p_1=p_2=\infty$.
These fundamental results were given by
Coifman-Meyer \cite{CM-1, CM-2},
Kenig-Stein \cite{Kenig-Stein},
and Grafakos-Torres \cite{Grafakos-Torres}.
In the last decade,
the research on bilinear (multilinear) multipliers
of limited smoothness has been developed by several authors;
here we mention
Tomita \cite{Tomita}, Grafakos-Si \cite{Grafakos-Si},
Grafakos-Miyachi-Tomita \cite{Grafakos-Miyachi-Tomita},
Miyachi-Tomita \cite{Miyachi-Tomita-RMI}, and Park \cite{Park}.

To explain the results of \cite{Grafakos-Miyachi-Tomita, Miyachi-Tomita-RMI},
we shall introduce some notations.
Let $X_1,X_2$, and $Y$ be function spaces on $\R^n$ 
equipped with (quasi-)norms $\|\cdot \|_{X_1}$, $\|\cdot \|_{X_2}$, 
and $\|\cdot \|_{Y}$, 
respectively.  
If there exists a constant $A$ such that 
\begin{equation}\label{boundedness-X_1X_2Y}
\|T_m(f_1,f_2)\|_{Y}
\le A \|f_1\|_{X_1} \|f_2\|_{X_2} 
\;\;
\text{for all}
\;\;
f_1\in \calS \cap X_1  
\;\;
\text{and}
\;\;
f_2\in \calS \cap X_2,  
\end{equation}
then, 
with a slight abuse of terminology, 
we say that 
$T_m$ is bounded from 
$X_1 \times X_2$ to $Y$.  
The smallest constant $A$ of 
\eqref{boundedness-X_1X_2Y} 
is denoted by 
$\|T_m\|_{X_1 \times X_2 \to Y}$. 
For $s_1,s_2 \in \R$ and $m \in \calS'(\R^n \times \R^n)$,
the product type Sobolev norm $\|m\|_{W^{(s_1,s_2)}}$
is defined by
\begin{equation}\label{product-Sobolev}
\begin{split}
\|m\|_{W^{(s_1,s_2)}}
&=\| \langle y_1 \rangle^{s_1}\langle y_2 \rangle^{s_2}
\widehat{m}(y_1,y_2)\|_{L^2(\R^n_{y_1} \times \R^n_{y_2})}
\\
&=(2\pi)^{n}\|\langle D_{\xi_1} \rangle^{s_1}
\langle D_{\xi_2} \rangle^{s_2}
m(\xi_1,\xi_2)\|_{L^2(\R^n_{\xi_1} \times \R^n_{\xi_2})},
\end{split}
\end{equation}
where $\langle y_i \rangle=(1+|y_i|^2)^{1/2}$, $i=1,2$, and
\[
\langle D_{\xi_1} \rangle^{s_1}
\langle D_{\xi_2} \rangle^{s_2}
m(\xi_1,\xi_2)
=\frac{1}{(2\pi)^{2n}}
\int_{(\R^n)^2}
e^{i (\xi_1\cdot y_1+\xi_2\cdot y_2)}
\langle y_1 \rangle^{s_1}
\langle y_2 \rangle^{s_2}
\widehat{m}(y_1,y_2)\,
dy_1dy_2 .
\]
Let $\Psi$ be a function in $\calS(\R^d)$ satisfying
\begin{equation}\label{LP-homogeneous}
\mathrm{supp}\, \Psi
\subset \{\xi \in \R^d \,:\, 1/2 \le |\xi| \le 2\},
\qquad
\sum_{k \in \Z}\Psi(\xi/2^k)=1, \ \xi \in \R^d \setminus \{0\}.
\end{equation}
For $m(\xi_1,\xi_2) \in L^{\infty}(\R^n \times \R^n)$
and $j \in \Z$,
we set
\begin{equation}\label{def-mj}
m_j(\xi_1,\xi_2)=m(2^j\xi_1,2^j\xi_2)\Psi(\xi_1,\xi_2),
\quad (\xi_1,\xi_2) \in \R^n \times \R^n,
\end{equation}
where $\Psi \in \calS(\R^{2n})$ is
as in \eqref{LP-homogeneous} with $d=2n$.
The results of \cite{Grafakos-Miyachi-Tomita, Miyachi-Tomita-RMI}
state that if $s_1,s_2>n/2$, $1 \le p_1,p_2,p \le \infty$, and $1/p_1+1/p_2=1/p$,
then
\begin{equation}\label{CJM-RMI}
\|T_m\|_{L^{p_1} \times L^{p_2} \to L^p}
\lesssim
\sup_{j \in \Z}\|m_j\|_{W^{(s_1,s_2)}},
\end{equation}
where $L^{p_i}$ is replaced by $H^1$ if $p_i=1$,
and $L^p$ is replaced by $BMO$ if $p_1=p_2=\infty$.
(See \cite{Miyachi-Tomita-RMI} for the result in the full range $0 < p_1, p_2, p \le \infty$.)

One of the purposes of this paper
is to find an $L^2 \times L^{\infty} \to L^2$ estimate
sharper than \eqref{CJM-RMI}.
It should be mentioned that
the $L^2 \times L^{\infty} \to L^2$ boundedness
is a starting point to prove the $L^{p_1} \times L^{p_2} \to L^p$ one
for general $p_1,p_2,p$
in many problems; see e.g., \cite{Miyachi-Tomita-RMI}.
Moreover, we want to estimate $\|T_m\|_{L^2 \times L^\infty \to L^2}$
by the quantity which allows us to use a duality argument.
To use duality, we need to treat the map,
\[
m(\xi_1,\xi_2) \mapsto m^{*2}(\xi_1,\xi_2)=m(\xi_1, -\xi_1-\xi_2) 
\]
(see Subsection \ref{section4.2}).
However, it is impossible to control this map by
the product type Sobolev norm, \eqref{product-Sobolev},
that is,
$\|m^{*2}\|_{W^{(s_1,s_2)}}$
cannot be estimated by $\|m\|_{W^{(s_1,s_2)}}$.

Instead of \eqref{product-Sobolev},
we introduce the norms
\begin{align*}
&\|m\|_{W^{(s_1,s_2)}_1}
=\left\|\left\|
\langle D_{\xi_1} \rangle^{s_1}
\langle D_{\xi_2} \rangle^{s_2}
m(\xi_1,\xi_2)\right\|_{L^2(\R^n_{\xi_1})}
\right\|_{L^{\infty}(\R^n_{\xi_2})},
\\
&\|m\|_{W^{(s_1,s_2)}_2}
=\left\|\left\|
\langle D_{\xi_1} \rangle^{s_1}
\langle D_{\xi_2} \rangle^{s_2}
m(\xi_1,\xi_2)\right\|_{L^2(\R^n_{\xi_2})}
\right\|_{L^{\infty}(\R^n_{\xi_1})},
\end{align*}
where $s_1,s_2 \in \R$.
Given $s_1>0$ and $s_2>n/2$,
we can take $\widetilde{s}_1$ and $\widetilde{s}_2$
satisfying $s_1>\widetilde{s}_1>0$, $s_2>\widetilde{s}_2>n/2$,
and $\widetilde{s}_1+\widetilde{s}_2<s_2$.
Then the estimate
\[
\|m^{*2}\|_{W^{(\widetilde{s}_1,\widetilde{s}_2)}_2}
\lesssim \|m\|_{W^{(s_1,s_2)}_2}
\]
holds (see Lemma \ref{change-variable}),
and the map $m \mapsto m^{*2}$ is controlled by
the norm $\|\cdot\|_{W^{(s_1,s_2)}_2}$
with $s_1>0$ and $s_2>n/2$ in this sense.
It should be also mentioned that
if $s_1>0$, $s_2>n/2$, and $\|m\|_{W^{(s_1,s_2)}_2}<\infty$,
then $m(\xi_1,\xi_2)$ belongs to $L^{\infty}$
(more precisely, $\|m\|_{L^{\infty}} \lesssim \|m\|_{W^{(s_1,s_2)}_2}$)
and can be modified on a set of zero measure so that
the resulting function is continuous
(see Remark \ref{bounded-continuous}).

The first main result of this paper reads as follows.

%%%==================================================================
\begin{thm}\label{mainthm}
Let $s_1>0$ and $s_2>n/2$.
Then
\[
\|T_m\|_{L^2 \times L^\infty \to L^2}
+\|T_m\|_{L^2 \times L^2 \to L^1}
\lesssim
\sup_{j \in \Z}\|m_j\|_{W^{(s_1,s_2)}_2} .
\]
\end{thm}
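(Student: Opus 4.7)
My strategy is to prove the $L^2 \times L^\infty \to L^2$ bound directly and deduce the $L^2 \times L^2 \to L^1$ bound by bilinear duality. Using the identity $\langle T_m(f_1, f_2), f_3\rangle = \langle T_{m^{*2}}(f_1, f_3), f_2\rangle$ (up to complex conjugation), one has $\|T_m\|_{L^2 \times L^2 \to L^1} = \|T_{m^{*2}}\|_{L^2 \times L^\infty \to L^2}$; combined with Lemma \ref{change-variable}, which controls $\|m^{*2}\|_{W^{(\widetilde s_1, \widetilde s_2)}_2}$ by $\|m\|_{W^{(s_1, s_2)}_2}$ for appropriate $\widetilde s_1 > 0$ and $\widetilde s_2 > n/2$, this reduces the $L^2 \times L^2 \to L^1$ bound to the $L^2 \times L^\infty \to L^2$ bound applied to $m^{*2}$.

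For the $L^2 \times L^\infty \to L^2$ bound, the first step is a single-scale estimate: for $m$ supported in $\{1/2 \le |(\xi_1, \xi_2)| \le 2\}$,
\[
\|T_m(f_1, f_2)\|_{L^2} \lesssim \|m\|_{W^{(s_1, s_2)}_2} \|f_1\|_{L^2} \|f_2\|_{L^\infty}.
\]
The key idea is to expand $m$ as a Fourier series only in the variable $\xi_2$. Taking a smooth cutoff $\widetilde\Psi_2$ with $\widetilde\Psi_2 = 1$ on $\{|\xi_2| \le 2\}$ and a suitable period, one expands
\[
m(\xi_1, \xi_2) \widetilde\Psi_2(\xi_2) = \sum_{\nu \in \Z^n} b_\nu(\xi_1) e^{i \nu \cdot \xi_2}.
\]
This yields the operator decomposition
\[
T_m(f_1, f_2)(x) = \sum_\nu (T^{(1)}_{b_\nu} f_1)(x) \cdot (\widetilde\Psi_2(D) \tau_\nu f_2)(x),
\]
where $T^{(1)}_{b_\nu}$ is a linear $\xi_1$-multiplier and $\tau_\nu$ is translation. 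A pointwise Cauchy--Schwarz in $\nu$ with weight $(1 + |\nu|)^{\pm s_2}$ (using $s_2 > n/2$ for summability), the uniform bound $\|\widetilde\Psi_2(D) \tau_\nu f_2\|_{L^\infty} \lesssim \|f_2\|_{L^\infty}$, and Plancherel in $x$ reduce the $L^2$-norm squared to
\[
\|f_2\|_{L^\infty}^2 \int |\widehat{f_1}(\xi_1)|^2 \sum_\nu (1 + |\nu|)^{2s_2} |b_\nu(\xi_1)|^2 \, d\xi_1,
\]
with the inner sum bounded pointwise in $\xi_1$ by $\|\langle D_{\xi_2}\rangle^{s_2} m(\xi_1, \cdot)\|_{L^2_{\xi_2}}^2 \le \|m\|_{W^{(s_1, s_2)}_2}^2$. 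Notably, only $s_2 > n/2$ is essential at this stage; the strict positivity $s_1 > 0$ enters only in the scale summation below.

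Dilation invariance then gives $\|T_{m^{(j)}}\|_{L^2 \times L^\infty \to L^2} \lesssim \|m_j\|_{W^{(s_1, s_2)}_2}$ for each $j$. The main obstacle is summing $T_m = \sum_j T_{m^{(j)}}$, since only $\sup_j \|m_j\|_{W^{(s_1, s_2)}_2}$ --- not a sum --- is controlled. To handle this I would Littlewood--Paley decompose $f_1 = \sum_k \phi_k(D) f_1$ and split each $m^{(j)}$ into paraproduct regions based on the relative sizes of $|\xi_1|$ and $|\xi_2|$. In the off-diagonal regions (say $k \le j - C$, where $|\xi_1| \ll |\xi_2|$), the output $T_{m^{(j)}}(\phi_k(D) f_1, f_2)$ is Fourier-supported in the annulus $\{|\xi| \sim 2^j\}$, giving $\ell^2$-orthogonality in $j$ for fixed $k$. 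A Schur-type argument exploiting $s_1 > 0$ then produces geometric decay in $|j - k|$, so that the double sum converges and closes via the Littlewood--Paley identity for $f_1$; the diagonal region ($|j - k| \le C$) is handled by the triangle inequality. The principal difficulty is that the $L^\infty_{\xi_1}$-component of the mixed norm is \emph{weaker} than the standard $L^2_{\xi_1}$-Sobolev setting, so one cannot appeal to the $\xi_1$-Sobolev embedding (which would require $s_1 > n/2$); the resolution is the Fourier-series argument in $\xi_2$ above, which avoids any $\xi_1$-embedding at the single-scale level, together with the orthogonality in the output variable $\xi = \xi_1 + \xi_2$ coming from the paraproduct decomposition.
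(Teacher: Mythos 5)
Your duality reduction of the $L^2\times L^2\to L^1$ bound to the $L^2\times L^\infty\to L^2$ bound via $m^{*2}$ and Lemma \ref{change-variable} (together with Lemma \ref{remove-cut} to restore the Littlewood--Paley cutoff) is exactly what the paper does in Subsection \ref{section4.2}, so that part is fine. Your single-scale estimate is also correct and uses a genuinely different mechanism from the paper: you expand the multiplier in a Fourier series in $\xi_2$ only, whereas the paper uses a pointwise kernel estimate (Lemma \ref{TMJ-pointwise}). Both isolate the same feature, namely that only $s_2>n/2$ and the $L^\infty_{\xi_1}(L^2_{\xi_2})$ structure are needed at a single scale, and your route is arguably cleaner at that stage.

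The genuine gap is in the summation over scales, and it is not a detail. In the off-diagonal region $k\le j-C$ you claim a Schur-type decay ``exploiting $s_1>0$'' in $|j-k|$. But $\|m_j\|_{W^{(s_1,s_2)}_2}$ involves $L^\infty_{\xi_1}$ on the outside, so restricting the $\xi_1$-support of $m_j$ to $|\xi_1|\sim 2^{k-j}$ does \emph{not} shrink this norm: applying your single-scale estimate to $T_{m^{(j)}}(\phi_k(D)f_1,f_2)$ gives the uniform bound $\|\phi_k(D)f_1\|_{L^2}\|f_2\|_{L^\infty}\sup_j\|m_j\|_{W^{(0,s_2)}_2}$ with no gain in $j-k$. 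The decay you need only appears after subtracting the $\xi_1=0$ slice: the paper replaces $m_j(\xi_1,\xi_2)$ by $\tau_j(\xi_1,\xi_2)=m_j(\xi_1,\xi_2)-m_j(0,\xi_2)$, proves via Taylor expansion and the $\xi_1$-Littlewood--Paley decomposition of $m_j$ that $\|\langle D_{\xi_2}\rangle^{s_2}\tau_j(\xi_1,\cdot)\|_{L^2_{\xi_2}}\lesssim |2^{-j}\xi_1|^{\epsilon}$ for $0<\epsilon<s_1$ (estimate \eqref{V2-case-3}), and this is precisely where $s_1>0$ enters. Without the subtraction there is nothing to gain, so your Schur argument cannot close.

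The subtracted ``constant'' piece $m_j(0,\xi_2)$, together with the diagonal region $|j-k|\le C$ (which contains the $V_0$-type configuration $|\xi_1|\sim|\xi_2|\sim 2^j$, $|\xi_1+\xi_2|\ll 2^j$), produces terms of the schematic form $\sum_j\int|a_j(D)f_1(x)|^2\,|\psi(2^{-j}D)f_2(x)|^2\,dx$ in which the output is \emph{not} localized at scale $2^j$, so there is no orthogonality and the triangle inequality over $j$ diverges. What controls this is the observation that $\sum_j(\zeta_j*|\psi(2^{-j}D)f_2|^2)(x)\otimes\delta_{2^{-j}}(t)$ is a Carleson measure with constant $\lesssim\|f_2\|_{BMO}^2\le\|f_2\|_{L^\infty}^2$ (Lemma \ref{CJM-Carleson}); this BMO/Carleson square-function estimate on $f_2$ is the only place where the $L^\infty$ hypothesis on $f_2$ is used beyond a uniform sup bound, and it is entirely absent from your sketch. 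In short, the paraproduct orthogonality in the output variable handles only part of the sum; closing the argument requires both the subtraction of $m_j(0,\cdot)$ to create the $s_1>0$ gain and the Carleson estimate for the remaining pieces, corresponding to the cone decomposition into $\Gamma(V_0),\Gamma(V_1),\Gamma(V_2)$ in Subsection \ref{section4.1}.
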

%%%==================================================================

By the Sobolev embedding theorem,
for each $s_1,s_2 \in \R$ and $\epsilon>0$,
\[
|\langle D_{\xi_1} \rangle^{s_1}
\langle D_{\xi_2} \rangle^{s_2}
m(\xi_1,\xi_2)|
\lesssim
\left\|
\langle D_{\xi_1} \rangle^{s_1+n/2+\epsilon}
\langle D_{\xi_2} \rangle^{s_2}
m(\xi_1,\xi_2)\right\|_{L^2(\R^n_{\xi_1})} .
\]
Hence,
if $s_1,s_2>n/2$, $\widetilde{s}_1>0$, and $\widetilde{s}_1+n/2<s_1$,
then
\begin{equation}\label{comparison}
\sup_{j \in \Z}\|m_j\|_{W^{(\widetilde{s}_1,s_2)}_2}
\lesssim
\sup_{j \in \Z}\|m_j\|_{W^{(s_1,s_2)}} .
\end{equation}
This means that Theorem \ref{mainthm} is an improvement of \eqref{CJM-RMI}
for the case $(p_1,p_2)=(2,\infty), (2,2)$.

By symmetry, we also have
\[
\|T_m\|_{L^\infty \times L^2 \to L^2}
+\|T_m\|_{L^2 \times L^2 \to L^1}
\lesssim
\sup_{j \in \Z}\|m_j\|_{W^{(s_1,s_2)}_1} ,
\]
where $s_1>n/2$ and $s_2>0$.
Combining this with Theorem \ref{mainthm},
we see that
if $s_1, \widetilde{s_2}>n/2$ and $\widetilde{s}_1, s_2>0$, then
\[
\|T_m\|_{L^2 \times L^2 \to L^1}
\lesssim
\min\left\{
\sup_{j \in \Z}\|m_j\|_{W^{(s_1,s_2)}_1}, \
\sup_{j \in \Z}\|m_j\|_{W^{(\widetilde{s}_1,\widetilde{s}_2)}_2}
\right\} .
\]
In \cite{Miyachi-Tomita-TMJ},
it was also shown that if $s_1,s_2>n/2$, then
\[
\|T_m\|_{L^2 \times L^2 \to L^1}
\lesssim
\max
\left\{
\sup_{j \in \Z}\|m_j\|_{W^{(s_1,0)}_1}, \
\sup_{j \in \Z}\|m_j\|_{W^{(0,s_2)}_2} \right\} .
\]
However, we cannot compare these two results.
A related result can be also found in
Grafakos-He-Honz\'ik \cite{Grafakos-He-Honzik}.

Coifman-Lions-Meyer-Semmes \cite[Theorem and Remark V.1]{CLMS}
proved that if
$m$ is a smooth multiplier satisfying \eqref{CM-multiplier}
and $m(\xi_1,\xi_2)=0$ for $(\xi_1,\xi_2) \in \R^n \times \R^n \setminus \{(0,0)\}$
with $\xi_1+\xi_2=0$,
then $T_m$ is bounded from $H^{p_1} \times H^{p_2}$ to $H^p$,
$p_1,p_2,p>n/(n+1)$, $1/p_1+1/p_2=1/p$.
This result was extended to the full range $0<p_1,p_2 \le \infty$ and $0<p \le 1$
by Grafakos-Nakamura-Nguyen-Sawano \cite{GNNS}
under the suitable vanishing conditions.
Another purpose of this paper
is to give related results in this direction for multipliers of limited smoothness,
and the second main result reads as follows.

%%%==================================================================
\begin{thm}\label{mainthm2}
Let $s_1>0$ and $s_2>n/2$.
\begin{enumerate}
\item
If $m(\xi_1,0)=0$ for $\xi_1 \in \R^n \setminus \{0\}$, then
\[
\|T_m\|_{L^2 \times BMO \to L^2}
\lesssim
\sup_{j \in \Z}\|m_j\|_{W^{(s_1,s_2)}_2}.
\]
\item
If $m(\xi_1,\xi_2)=0$
for $(\xi_1,\xi_2) \in \R^n \times \R^n \setminus \{(0,0)\}$
satisfying $\xi_1+\xi_2=0$, then
\[
\|T_m\|_{L^2 \times L^2 \to H^1}
\lesssim
\sup_{j \in \Z}\|m_j\|_{W^{(s_1,s_2)}_2}.
\]
\end{enumerate}
\end{thm}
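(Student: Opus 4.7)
My plan is to reduce parts (1) and (2) to each other via $H^1$-$BMO$ duality and then carry out only one proof. The trilinear form identity
\[
\int T_m(f_1,f_2)(x)\,f_3(x)\,dx = \int T_{m^{*2}}(f_1,f_3)(x)\,f_2(x)\,dx,
\qquad m^{*2}(\xi_1,\xi_2)=m(\xi_1,-\xi_1-\xi_2),
\]
combined with $\|F\|_{H^1}\sim \sup_{\|b\|_{BMO}\le 1}|\int Fb\,dx|$, shows that the $L^2\times L^2\to H^1$ estimate for $T_m$ is equivalent to the $L^2\times BMO\to L^2$ estimate for $T_{m^{*2}}$. The vanishing conditions interchange: $m=0$ on $\{\xi_1+\xi_2=0\}$ corresponds precisely to $m^{*2}(\xi_1,0)=0$. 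Since Lemma \ref{change-variable} transfers the $W^{(s_1,s_2)}_2$ regularity from $m$ to $m^{*2}$ (with a small loss in indices absorbed by the open ranges $s_1>0$, $s_2>n/2$), and since a corresponding transfer applies piece by piece to the Littlewood-Paley cutoffs $m_j$ (on the support of $(m^{*2})_j$, the evaluation of $m$ occurs in an annulus of radius $\sim 2^j$, so only finitely many of the original pieces $M_l$ with $l\sim j$ contribute), parts (1) and (2) are equivalent, and I focus on (1).

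For part (1), I would use a paraproduct-style decomposition adapted to Theorem \ref{mainthm}. Since $m(\xi_1,0)=0$ forces $T_m(f_1,c)=0$ for constants $c$, the operator is well defined on $b\in BMO$. Write $m=\sum_{j\in\Z} M_j$ with $M_j(\xi_1,\xi_2)=m(\xi_1,\xi_2)\Psi(\xi_1/2^j,\xi_2/2^j)$, so $M_j$ is supported in $|(\xi_1,\xi_2)|\sim 2^j$ and in particular $|\xi_2|\le 2^{j+1}$. Apply a Littlewood-Paley decomposition $b=\sum_k \Delta_k b$ on $\R^n$; only $k\le j+C$ contribute to $T_{M_j}(f_1,b)$. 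I split this sum into a diagonal regime $k\in[j-L,j+C]$ and a low-frequency tail $k<j-L$. The diagonal regime is handled by Theorem \ref{mainthm} applied to each frequency-localized pair $(f_1,\Delta_k b)$, with the $j$-sum closed by the Carleson-measure characterization $\int_Q\sum_{2^{-k}\le\ell(Q)}|\Delta_k b|^2\lesssim |Q|\,\|b\|_{BMO}^2$ together with Cauchy-Schwarz in the scale index. For the low-frequency tail the vanishing condition is essential: on $|\xi_2|\lesssim 2^{j-L}$ the multiplier is close to $m(\xi_1,0)=0$, yielding a smallness that is paired against the residual BMO control of $P_{\le j-L}b$.

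The main obstacle I expect is quantifying this smallness using only the $W^{(s_1,s_2)}_2$ norm, which measures smoothness via an $L^2$-based Sobolev expression rather than pointwise. The hypothesis $s_2>n/2$ provides Sobolev embedding into continuous $\xi_2$-functions (Remark \ref{bounded-continuous}), and combined with $m(\xi_1,0)=0$ this should yield H\"older regularity in $\xi_2$ of exponent $\epsilon=s_2-n/2>0$, producing a decay factor $\sim 2^{-\epsilon L}$ in the tail. Carefully rescaling so that this H\"older gain is extracted without inflating the base Sobolev quantity $\sup_j\|m_j\|_{W^{(s_1,s_2)}_2}$ is the most delicate step, and it is precisely what forces the indices $s_1>0$ and $s_2>n/2$ to match those of Theorem \ref{mainthm}.
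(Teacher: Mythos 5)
Your reduction of part (2) to part (1) via the trilinear form identity and Lemma \ref{change-variable} matches the paper. For part (1), however, your paraproduct route diverges from the paper's argument and leaves the key step unresolved.

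The paper never performs a double Littlewood--Paley decomposition. It invokes the Fefferman--Stein decomposition $g = g_0 + \sum_{k=1}^n R_k g_k$ of a $BMO$ function into $L^\infty$ pieces and Riesz transforms of $L^\infty$ pieces, which moves the non-$L^\infty$ part of $g$ into the multiplier via the Riesz symbol $-i\xi_{2,k}/|\xi_2|$, so Theorem \ref{mainthm} applies directly to each piece. The only remaining work is to show that multiplying $\langle D_{\xi_1}\rangle^{s_1} m_j(\xi_1,\cdot)$ by the homogeneous degree-$0$ (hence singular at $\xi_2=0$) Riesz symbol does not inflate the $W^{s_2}_{\xi_2}$ norm beyond an acceptable $\epsilon$-loss; this is Lemma \ref{Sobolev-estimate}, and it is where the vanishing $m_j(\xi_1,0)=0$ is actually consumed.

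Your outline has two genuine gaps. First, in the diagonal regime you propose to apply Theorem \ref{mainthm} to each frequency-localized pair $(f_1,\Delta_k b)$ and then close the $j$-sum with the Carleson characterization of $BMO$. But Theorem \ref{mainthm} already delivers an $L^2$ bound of the form $\|T_{M_j}(f_1,\Delta_j b)\|_{L^2}\lesssim C\|f_1\|_{L^2}\|\Delta_j b\|_{L^\infty}$; once the $L^2$ norm of each summand has been taken, the Carleson structure is destroyed and the $j$-sum diverges. The Carleson argument must run at the pointwise/dualized level \emph{before} the $L^2$ norm, as in the $V_0$ and $V_2$ cases of Section \ref{section4.1} — in other words you would have to re-open Theorem \ref{mainthm}'s proof, not re-use it as a black box. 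Second, in the low-frequency tail your plan to turn H\"older regularity in $\xi_2$ into a factor $2^{-\epsilon L}$ that survives the return to the rescaled mixed Sobolev norm is flagged by you as the most delicate step, and it is left undone. This is precisely the problem that the paper's Lemma \ref{Sobolev-estimate} is built to solve, via a dyadic decomposition of the homogeneous factor together with Lemma \ref{lem-decomposition}'s splitting of the vanishing function $f$; without an analogue of that lemma, the tail estimate does not close.
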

%%%==================================================================

By \eqref {comparison},
we have the following 
as a result of Theorem \ref{mainthm2}.

%%%==================================================================
\begin{cor}
Let $s_1, s_2>n/2$.
\begin{enumerate}
\item
If $m(\xi_1,0)=0$ for $\xi_1 \in \R^n \setminus \{0\}$, then
\[
\|T_m\|_{L^2 \times BMO \to L^2}
\lesssim
\sup_{j \in \Z}\|m_j\|_{W^{(s_1,s_2)}}.
\]
\item
If $m(\xi_1,\xi_2)=0$
for $(\xi_1,\xi_2) \in \R^n \times \R^n \setminus \{(0,0)\}$
satisfying $\xi_1+\xi_2=0$, then
\[
\|T_m\|_{L^2 \times L^2 \to H^1}
\lesssim
\sup_{j \in \Z}\|m_j\|_{W^{(s_1,s_2)}}.
\]
\end{enumerate}
\end{cor}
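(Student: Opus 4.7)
The plan is to derive the Corollary as an immediate consequence of Theorem \ref{mainthm2} combined with the Sobolev embedding comparison \eqref{comparison} already recorded in the introduction. Since the vanishing conditions on $m$ in each part of the Corollary are verbatim those in Theorem \ref{mainthm2}, no additional work is needed on the geometry of the multiplier; only the norm on the right-hand side needs to be traded.

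Concretely, given $s_1, s_2 > n/2$, I would first pick an auxiliary exponent $\widetilde{s}_1$ with $0 < \widetilde{s}_1 < s_1 - n/2$. This is possible precisely because $s_1 > n/2$, and the pair $(\widetilde{s}_1, s_2)$ then satisfies the hypotheses $\widetilde{s}_1 > 0$, $s_2 > n/2$ required by Theorem \ref{mainthm2}. Applying part (1) under the vanishing condition $m(\xi_1, 0) = 0$ yields
\[
\|T_m\|_{L^2 \times BMO \to L^2} \lesssim \sup_{j \in \Z} \|m_j\|_{W^{(\widetilde{s}_1, s_2)}_2},
\]
and applying part (2) under the vanishing condition on the hyperplane $\xi_1 + \xi_2 = 0$ yields the analogous bound for $\|T_m\|_{L^2 \times L^2 \to H^1}$.

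To finish, I would invoke \eqref{comparison} with the chosen exponents, which gives
\[
\sup_{j \in \Z} \|m_j\|_{W^{(\widetilde{s}_1, s_2)}_2} \lesssim \sup_{j \in \Z} \|m_j\|_{W^{(s_1, s_2)}},
\]
and chaining the two estimates completes the proof in both cases. I do not anticipate any substantive obstacle: the argument is essentially a one-line reduction. The only point worth flagging is that the $n/2$ derivatives lost in the first variable when passing through \eqref{comparison} are exactly what forces the Corollary to require $s_1 > n/2$ rather than merely $s_1 > 0$ as in Theorem \ref{mainthm2}; this is the precise price paid for converting the mixed norm $W^{(\cdot,\cdot)}_2$ into the symmetric Sobolev norm $W^{(\cdot,\cdot)}$.
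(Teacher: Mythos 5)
Your proposal is correct and coincides with the paper's own argument: the paper derives the Corollary exactly by applying Theorem \ref{mainthm2} with a reduced first exponent and then invoking the comparison inequality \eqref{comparison}, which is precisely what you do. Your additional observation about why the hypothesis must be strengthened to $s_1 > n/2$ is a fair and accurate reading of the cost of trading the mixed norm $W^{(\cdot,\cdot)}_2$ for the product Sobolev norm $W^{(\cdot,\cdot)}$.
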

%%%==================================================================

The contents of this paper are as follows.
In Section \ref{section2},
we recall some preliminary facts.
In Section \ref{section3},
we give basic properties of $W^{(s_1,s_2)}_i$-norms.
In Sections \ref{section4} and \ref{section5},
we prove Theorems \ref{mainthm} and \ref{mainthm2},
respectively.
In Appendix,
we give the proofs of the lemmas cited in Section \ref{section2}.

%%%==================================================================
%%%==================================================================
\section{Preliminaries}\label{section2}

For two nonnegative quantities $A$ and $B$,
the notation $A \lesssim B$ means that
$A \le CB$ for some unspecified constant $C>0$,
and $A \approx B$ means that
$A \lesssim B$ and $B \lesssim A$.
For $a \ge 0$,
the notation $[a]$ means the integer part of $a$.
We denote by $\ichi_S$ the characteristic function of a set $S$.

Let $\calS(\R^n)$ and $\calS'(\R^n)$ be the Schwartz space of
rapidly decreasing smooth functions on $\R^n$ and its dual,
the space of tempered distributions, respectively.
We define the Fourier transform $\calF f$
and the inverse Fourier transform $\calF^{-1}f$
of $f \in \calS(\R^n)$ by
\[
\calF f(\xi)
=\widehat{f}(\xi)
=\int_{\R^n}e^{-ix\cdot\xi} f(x)\, dx
\quad \text{and} \quad
\calF^{-1}f(x)
=\frac{1}{(2\pi)^n}
\int_{\R^n}e^{ix\cdot \xi} f(\xi)\, d\xi.
\]

For a function $\sigma(x,\xi)\in L^{\infty}(\R^n \times \R^n)$, 
we define the linear pseudo-differential operator $\sigma(X,D)$ by 
\[
\sigma(X,D)f(x)
=\frac{1}{(2\pi)^n}\int_{\R^n}e^{i x \cdot \xi}
\sigma(x,\xi)\widehat{f}(\xi)\, d\xi,
\qquad f \in \calS(\R^n).
\]
In particular, if $\sigma$ is an $x$-independent symbol,
then we denote by $\sigma(D)$ the
corresponding linear Fourier multiplier operator.
The Hardy-Littlewood maximal operator $M$ is defined by
\[
Mf(x)=\sup_{r>0}\frac{1}{r^n}
\int_{|x-y| < r}|f(y)|\, dy,
\]
where $f$ is a locally integrable function on $\R^n$.

Let $F(\xi_1,\xi_2)$ be a function on $\R^n \times \R^n$.
We denote
the $L^{p_2}_{\xi_2}(L^{p_1}_{\xi_1})$-norm
and $L^{p_1}_{\xi_1}(L^{p_2}_{\xi_2})$-norm
of $F(\xi_1,\xi_2)$ by
$\left\|\left\|F(\xi_1,\xi_2)
\right\|_{L^{p_1}_{\xi_1}}\right\|_{L^{p_2}_{\xi_2}}$
and
$\left\|\left\|F(\xi_1,\xi_2)
\right\|_{L^{p_2}_{\xi_2}}\right\|_{L^{p_1}_{\xi_1}}$,
\begin{align*}
&\left\|\left\|F(\xi_1,\xi_2)\right\|_{L^{p_1}_{\xi_1}}
\right\|_{L^{p_2}_{\xi_2}}
=\left\{\int_{\R^n}\left(\int_{\R^n}
|F(\xi_1,\xi_2)|^{p_1}\, d\xi_1 \right)^{p_2/p_1}\, d\xi_2 \right\}^{1/p_2},
\\
&\left\|\left\|F(\xi_1,\xi_2)\right\|_{L^{p_2}_{\xi_2}}
\right\|_{L^{p_1}_{\xi_1}}
=\left\{\int_{\R^n}\left(\int_{\R^n}
|F(\xi_1,\xi_2)|^{p_2}\, d\xi_2 \right)^{p_1/p_2}\, d\xi_1 \right\}^{1/p_1},
\end{align*}
with usual modifications if $p_1=\infty$ or $p_2=\infty$.
In the case $p_1=p_2$,
we simply write $\|\cdot\|_{L^{p_1}_{\xi_1,\xi_2}}$
instead of $\big\|\|\cdot\|_{L^{p_1}_{\xi_1}}\big\|_{L^{p_2}_{\xi_2}}$.
For $s \in \R$, the $L^2$-based Sobolev space $W^s(\R^n)$
consists of all $f \in \calS'(\R^n)$
such that
\[
\|f\|_{W^s}=\|\langle \cdot \rangle^s \widehat{f}\|_{L^2}<\infty.
\]

We recall the definitions and some properties of 
Hardy spaces $H^p$ and the space $BMO$ on $\R^n$
(see, e.g., \cite[Chapters 3 and 4]{Stein}).
Let $0<p \le \infty$, and let $\phi \in \calS(\R^n)$ be such that
$\int_{\R^n}\phi(x)\, dx \neq 0$. 
Then the Hardy space $H^p(\R^n)$ consists of all $f \in \calS'(\R^n)$
such that
\[
\|f\|_{H^p}=\Big\|\sup_{0<t<\infty}|[t^{-n}\phi(\cdot/t)]*f|
\Big\|_{L^p}<\infty.
\]
It is known that $H^p(\R^n)$ does not depend 
on the choice of the function $\phi$,
$H^1(\R^n)$ is continuously embedded into $L^1(\R^n)$,
and $H^p(\R^n)=L^p(\R^n)$, $1<p \le \infty$.
The space $BMO(\R^n)$ consists of
all locally integrable functions $f$ on $\R^n$ such that
\[
\|f\|_{BMO}
=\sup_{Q}\frac{1}{|Q|}
\int_{Q}|f(x)-f_Q|\, dx<\infty, 
\]
where $f_Q$ is the average of $f$ on $Q$
and the supremum is taken over all cubes $Q$ in $\R^n$.
It is known that the dual space of
$H^1(\R^n)$ is $BMO(\R^n)$.

The following two lemmas
are essentially the same as
\cite[Lemma 3.3]{Miyachi-Tomita-TMJ}
and \cite[Lemma 3.2]{Grafakos-Miyachi-Tomita},
but we give their proofs in Appendix for the reader's convenience.

%%%==================================================================
\begin{lem}\label{TMJ-pointwise}
Let $s \ge 0$, and set $\zeta_j(x)=2^{jn}(1+2^j|x|)^{-2s}$, $j \in \Z$.
Then 
\begin{align*}
&|T_{m(2^{-j}\cdot)}(f_1,f_2)(x)|
\lesssim
(\zeta_j*|f_1|^2)(x)^{1/2}
\left\|\int_{\R^n}e^{ix\cdot\xi_2}
\langle D_{\xi_1}\rangle^{s}
m(\xi_1,2^{-j}\xi_2)\widehat{f_2}(\xi_2)\, d\xi_2
\right\|_{L^2_{\xi_1}},
\\
&|T_{m(2^{-j}\cdot)}(f_1,f_2)(x)|
\lesssim
(\zeta_j*|f_2|^2)(x)^{1/2}
\left\|\int_{\R^n}e^{ix\cdot\xi_1}
\langle D_{\xi_2}\rangle^{s}
m(2^{-j}\xi_1,\xi_2)\widehat{f_1}(\xi_1)\, d\xi_1
\right\|_{L^2_{\xi_2}} .
\end{align*}
\end{lem}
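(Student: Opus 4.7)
My plan is to prove the first inequality; the second follows by the obvious symmetry $\xi_1\leftrightarrow\xi_2$, $f_1\leftrightarrow f_2$. The strategy has four ingredients: a dilation $\xi_1\mapsto 2^{j}\xi_1$ to strip $2^{-j}$ from the first slot of $m$; insertion of $\langle D_{\xi_1}\rangle^{-s}\langle D_{\xi_1}\rangle^{s}$ so that, by self-adjointness of $\langle D_{\xi_1}\rangle^{-s}$ on $L^{2}_{\xi_1}$, the negative-order operator can be transferred away from $m$; Cauchy--Schwarz in $\xi_1$; and Plancherel to identify the resulting ``minus-order'' factor with $(\zeta_j*|f_1|^{2})(x)^{1/2}$.

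Explicitly, starting from the definition of $T_{m(2^{-j}\cdot)}$ and substituting $\xi_1\mapsto 2^{j}\xi_1$ in the frequency integral, I would first write
\[
T_{m(2^{-j}\cdot)}(f_1,f_2)(x)=\frac{2^{jn}}{(2\pi)^{2n}}\int_{\R^n}e^{i\,2^{j}x\cdot\xi_1}\widehat{f_1}(2^{j}\xi_1)\,K(x,\xi_1)\,d\xi_1,
\]
\[
K(x,\xi_1):=\int_{\R^n}e^{ix\cdot\xi_2}\,m(\xi_1,2^{-j}\xi_2)\,\widehat{f_2}(\xi_2)\,d\xi_2,
\]
and then insert $\langle D_{\xi_1}\rangle^{-s}\langle D_{\xi_1}\rangle^{s}$ in front of $K$, move $\langle D_{\xi_1}\rangle^{-s}$ onto the factor $e^{i2^{j}x\cdot\xi_1}\widehat{f_1}(2^{j}\xi_1)$ by self-adjointness, and apply Cauchy--Schwarz in $\xi_1$. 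Since $\langle D_{\xi_1}\rangle^{s}$ commutes with the $\xi_2$-integral defining $K$, its $L^{2}_{\xi_1}$-norm is precisely the factor on the right-hand side of the claim.

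The main bookkeeping step is then to verify
\[
2^{jn}\bigl\|\langle D_{\xi_1}\rangle^{-s}\bigl[e^{i2^{j}x\cdot\xi_1}\widehat{f_1}(2^{j}\xi_1)\bigr]\bigr\|_{L^{2}_{\xi_1}}\lesssim(\zeta_j*|f_1|^{2})(x)^{1/2}.
\]
By Plancherel, the left-hand side is a constant multiple of $2^{jn}\|\langle y_1\rangle^{-s}\widehat{u}(y_1)\|_{L^{2}_{y_1}}$, where $u(\xi_1)=e^{i2^{j}x\cdot\xi_1}\widehat{f_1}(2^{j}\xi_1)$; a direct Fourier computation gives $\widehat{u}(y_1)=c_n\,2^{-jn}f_1(x-2^{-j}y_1)$. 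Substituting $z=2^{-j}y_1$ and using the comparability $\langle 2^{j}z\rangle^{-2s}\approx(1+2^{j}|z|)^{-2s}$, the $2^{jn}$-powers produced by the two dilation Jacobians combine exactly to reproduce the factor $\zeta_j$ and yield the required bound. The only delicate point is precisely this bookkeeping of dilation constants; beyond that, the proof is a single Cauchy--Schwarz combined with Plancherel.
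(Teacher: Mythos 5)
Your proof is correct, and at bottom it rests on the same two ingredients as the paper's argument: a weighted Cauchy--Schwarz and a Plancherel identity, with the weight $\langle\cdot\rangle^{\pm s}$ balanced between the two factors. The organization differs, though. The paper freezes $f_1$, writes $T_{m(2^{-j}\cdot)}(f_1,f_2)=\sigma_{f_1,j}(X,2^{-j}D)f_2$ for the symbol $\sigma_{f_1,j}(x,\xi_2)=(2\pi)^{-n}\int e^{ix\cdot\xi_1}m(2^{-j}\xi_1,\xi_2)\widehat{f_1}(\xi_1)\,d\xi_1$, passes to the integral-kernel representation of this linear pseudo-differential operator, and applies Cauchy--Schwarz directly at the kernel level with the spatial weights $(1+2^j|x-y_2|)^{\pm s}$; one Plancherel then converts the kernel factor into the Sobolev-weighted symbol norm, and the other factor is already $(\zeta_j*|f_2|^2)^{1/2}$. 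Your version instead dilates $\xi_1$ away first, inserts $\langle D_{\xi_1}\rangle^{-s}\langle D_{\xi_1}\rangle^{s}$, transfers $\langle D_{\xi_1}\rangle^{-s}$ across the bilinear pairing, does Cauchy--Schwarz in $\xi_1$, and uses a direct Fourier computation for the $\zeta_j$ factor. These are mirror images: the paper works in the spatial variable dual to $\xi_2$, you work in $\xi_1$ itself. One small point of precision: the transfer of $\langle D_{\xi_1}\rangle^{-s}$ is not ``self-adjointness'' in the usual $L^2$ sense (the pairing $\int\phi\,K\,d\xi_1$ has no complex conjugate), but rather the fact that $\langle D\rangle^{-s}$ is its own transpose under the bilinear pairing, which holds because the symbol $\langle y\rangle^{-s}$ is even and real; the paper's kernel-level Cauchy--Schwarz sidesteps this step entirely. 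Both proofs are essentially the same length and difficulty.
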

%%%==================================================================

%%%==================================================================
\begin{lem}\label{CJM-Carleson}
Let $\psi \in \calS(\R^n)$ be such that $\psi(0)=0$,
and $\zeta_j(x)=2^{jn}(1+2^j|x|)^{-(n+\epsilon)}$, $j \in \Z$,
with some $\epsilon>0$.
Then,
\[
\left(\sum_{j \in \Z}\int_{\R^n}
(\zeta_j*|f|)(x)^{2}(\zeta_j * |\psi(2^{-j}D)g|^2)(x)\,
dx \right)^{1/2}
\lesssim \|f\|_{L^2}\|g\|_{BMO}.
\]
\end{lem}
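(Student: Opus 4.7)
The plan is to read the left-hand side as a Carleson embedding bound. The $g$--factor, via the Fefferman--Stein characterization of $BMO$, produces a Carleson measure on $\R^{n}\times\Z$, and the $f$--factor is controlled by the Hardy--Littlewood maximal function through the standard non-tangential maximal bound for convolutions with $\zeta_{j}$. Schematically, the scales decouple: $g$ supplies the Carleson measure in the ``vertical'' $j$--variable, while $f$ supplies the ``horizontal'' $L^{2}$ ingredient.

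First I would invoke the Fefferman--Stein characterization: since $\psi(0)=0$ is equivalent to $\int \calF^{-1}\psi=0$, the discrete measure
\[
\mu=\sum_{j\in\Z}|\psi(2^{-j}D)g(x)|^{2}\,dx\otimes\delta_{j}
\]
on $\R^{n}\times\Z$ is Carleson with norm $\lesssim\|g\|_{BMO}^{2}$; that is,
\[
\sum_{j\,:\,2^{-j}\le\ell(Q)}\int_{Q}|\psi(2^{-j}D)g(x)|^{2}\,dx\lesssim |Q|\,\|g\|_{BMO}^{2}
\]
for every cube $Q$. Writing $h_{j}=|\psi(2^{-j}D)g|^{2}$, I would next show that the averaged measure $\mu'=\sum_{j}(\zeta_{j}*h_{j})\,dx\otimes\delta_{j}$ is again Carleson with norm $\lesssim\|g\|_{BMO}^{2}$. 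For a cube $Q$ of side $L$ and $j$ with $2^{-j}\le L$, I write $\int_{Q}\zeta_{j}*h_{j}\,dx=\int h_{j}(y)\int_{Q}\zeta_{j}(x-y)\,dx\,dy$ and split the outer integral into $y\in 2Q$ and dyadic shells $2^{l+1}Q\setminus 2^{l}Q$ for $l\ge1$. The piece over $2Q$ is handled by $\|\zeta_{j}\|_{L^{1}}\lesssim 1$ and the Carleson property on $2Q$. On the shells, the decay $\zeta_{j}(x-y)\lesssim 2^{jn}(2^{j}\cdot 2^{l}L)^{-(n+\epsilon)}$, combined with $2^{j}L\ge 1$, yields $\int_{Q}\zeta_{j}(x-y)\,dx\lesssim 2^{-ln}\,2^{-l\epsilon}$, which compensates the volume $|2^{l+1}Q|$ from the Carleson estimate on the enlarged cube and leaves a convergent geometric series in $l$.

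Finally I would apply the discrete Carleson embedding theorem to $\mu'$ with the test sequence $F_{j}(x)=(\zeta_{j}*|f|)(x)$. The standard pointwise bound $F_{j}\lesssim Mf$, together with the scale-$2^{-j}$ absorption of the shift $|y-x|\le 2^{-j}$ inside the $\zeta_{j}$-bump, gives
\[
F^{*}(x):=\sup_{j}\sup_{|y-x|\le 2^{-j}}F_{j}(y)\lesssim Mf(x).
\]
Carleson embedding then yields
\[
\sum_{j}\int F_{j}^{2}\,d\mu'\lesssim \|\mu'\|_{\mathrm{Carl}}\,\|F^{*}\|_{L^{2}}^{2}\lesssim \|g\|_{BMO}^{2}\,\|Mf\|_{L^{2}}^{2}\lesssim \|g\|_{BMO}^{2}\,\|f\|_{L^{2}}^{2},
\]
and taking square roots finishes the proof. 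The main technical obstacle is the annular-decomposition step verifying that $\mu'$ is still Carleson; the Fefferman--Stein input, the discrete Carleson embedding, and the pointwise bound $F^{*}\lesssim Mf$ are all classical.
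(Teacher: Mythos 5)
Your proposal is correct and follows the same Carleson-embedding strategy as the paper: the factor involving $g$ supplies a Carleson measure supported at the dyadic scales, the factor $\zeta_j*|f|$ is dominated nontangentially by $Mf$, and the discrete Carleson embedding combined with $\|Mf\|_{L^2}\lesssim\|f\|_{L^2}$ finishes. The one place you diverge is that the paper simply cites (from Lemma~3.1 of Grafakos--Miyachi--Tomita) the fact that $\sum_{j}(\zeta_j*|\psi(2^{-j}D)g|^2)\,dx\otimes\delta_{2^{-j}}$ is itself a Carleson measure, whereas you rederive it from the classical Fefferman--Stein square-function Carleson measure by a dyadic annular decomposition, using the $(n+\epsilon)$-decay of $\zeta_j$ and the hypothesis $2^{-j}\le\ell(Q)$ to beat the growth $|2^{l+1}Q|\approx 2^{ln}|Q|$; your bookkeeping there ($\int_Q\zeta_j(x-y)\,dx\lesssim 2^{-l(n+\epsilon)}$, net contribution $\lesssim 2^{-l\epsilon}|Q|$, summable in $l$) is correct, so your version is a self-contained substitute for the citation.
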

%%%==================================================================

%%%==================================================================
%%%==================================================================
\section{Basic properties of $W^{(s_1,s_2)}_i$-norms}
\label{section3}

Let $\psi \in \calS(\R^n)$ be as in
\eqref{LP-homogeneous} with $d=n$,
and set $\psi_0(\xi)=1-\sum_{k=1}^{\infty}\psi(\xi/2^k)$
and $\psi_k(\xi)=\psi(\xi/2^k)$, $k \ge 1$.
Then
$\mathrm{supp}\, \psi_0
\subset \{|\xi| \le 2\}$,
$\mathrm{supp}\, \psi_k \subset \{2^{k-1} \le |\xi| \le 2^{k+1}\}$,
$k \ge 1$,
and $\sum_{k \in \N_0}\psi_k(\xi)=1, \ \xi \in \R^n$.
We denote by $\Delta_{\boldsymbol{k}}$,
$\boldsymbol{k}=(k_1,k_2) \in \N_0^2$,
the Littlewood-Paley operator of product type,
namely,
\begin{align*}
\Delta_{\boldsymbol{k}}m(\xi_1,\xi_2)
&=\psi_{k_1}(D_{\xi_1})\psi_{k_2}(D_{\xi_2})m(\xi_1,\xi_2)
\\
&=\frac{1}{(2\pi)^{2n}}
\int_{(\R^n)^2}
e^{i (\xi_1\cdot y_1+\xi_2\cdot y_2)}
\psi_{k_1}(y_1)\psi_{k_2}(y_2)
\widehat{m}(y_1,y_2)\,
dy_1dy_2.
\end{align*}
For $\boldsymbol{s}=(s_1, s_2) \in \R^2$,
we introduce the Besov type norms by
\begin{align*}
&\|m\|_{B^{(s_1,s_2)}_1}
=\sup_{\boldsymbol{k} \in \N_0^2}
2^{\boldsymbol{k} \cdot \boldsymbol{s}}
\left\|\left\|\Delta_{\boldsymbol{k}}m(\xi_1,\xi_2)
\right\|_{L^2(\R^n_{\xi_1})}\right\|_{L^{\infty}(\R^n_{\xi_2})},
\\
&\|m\|_{B^{(s_1,s_2)}_2}
=\sup_{\boldsymbol{k} \in \N_0^2}
2^{\boldsymbol{k} \cdot \boldsymbol{s}}
\left\|\left\|\Delta_{\boldsymbol{k}}m(\xi_1,\xi_2)
\right\|_{L^2(\R^n_{\xi_2})}\right\|_{L^{\infty}(\R^n_{\xi_1})},
\end{align*}
where $\boldsymbol{k}\cdot\boldsymbol{s}=k_1s_1+k_2s_2$.
The more general definition of Besov spaces of product type
can be found in Sugimoto \cite[Definition 1.2]{Sugimoto}.
Roughly speaking,
the following proposition says that
the two norms $\|\cdot\|_{W^{(s_1,s_2)}_i}$ and
$\|\cdot\|_{B^{(s_1,s_2)}_i}$ are essentially equivalent
(since $\widetilde{s}_i$ can be chosen arbitrarily close to $s_i$).

%%%==================================================================
\begin{prop}\label{prop-equiv-norm}
Let $s_i, \widetilde{s}_i$
be real numbers satisfying $\widetilde{s}_i<s_i$, $i=1,2$.
Then
\begin{equation}\label{equiv-norm}
\|m\|_{B^{(\widetilde{s}_1,\widetilde{s}_2)}_i}
\lesssim
\|m\|_{W^{(\widetilde{s}_1,\widetilde{s}_2)}_i}
\lesssim
\|m\|_{B^{(s_1,s_2)}_i},
\quad i=1,2.
\end{equation}
\end{prop}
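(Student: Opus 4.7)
My plan is to reduce both estimates in \eqref{equiv-norm} to a single uniform bound: for any $t\in\R$ and $k\in\N_0$, the convolution kernel of $\psi_k(D)\langle D\rangle^{t}$ on $\R^n$ has $L^1$-norm $\lesssim 2^{tk}$. This should be routine. For $k\ge 1$, the substitution $y=2^k z$ shows that $2^{-tk}\psi(y/2^k)\langle y\rangle^{t}=\psi(z)(2^{-2k}+|z|^{2})^{t/2}$ is uniformly bounded in $\calS(\R^n)$ with $z$-support in $\{1/2\le|z|\le 2\}$; so its inverse Fourier transform has uniform $L^1$-norm, and undoing the dilation preserves $L^1$. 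For $k=0$, $\psi_0(y)\langle y\rangle^{t}\in C_c^{\infty}(\R^n)$, so its inverse Fourier transform lies in $\calS(\R^n)\subset L^{1}(\R^n)$.

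For the first inequality in \eqref{equiv-norm}, I would factor
\[
\Delta_{\boldsymbol{k}}m=\bigl[\psi_{k_1}(y_1)\langle y_1\rangle^{-\widetilde{s}_1}\bigr](D_{\xi_1})\,\bigl[\psi_{k_2}(y_2)\langle y_2\rangle^{-\widetilde{s}_2}\bigr](D_{\xi_2})\,\langle D_{\xi_1}\rangle^{\widetilde{s}_1}\langle D_{\xi_2}\rangle^{\widetilde{s}_2}m.
\]
By the kernel bound, the two bracketed operators are convolutions in $\xi_1$ and $\xi_2$ against $L^{1}$-kernels of norms $\lesssim 2^{-\widetilde{s}_1 k_1}$ and $\lesssim 2^{-\widetilde{s}_2 k_2}$. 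Minkowski's integral inequality applied in each variable separately shows that such convolutions preserve the mixed norm defining $\|\cdot\|_{W^{(s_1,s_2)}_i}$, so
\[
\|\Delta_{\boldsymbol{k}}m\|_{W^{(0,0)}_i}\lesssim 2^{-\widetilde{s}_1 k_1-\widetilde{s}_2 k_2}\|m\|_{W^{(\widetilde{s}_1,\widetilde{s}_2)}_i}.
\]
Multiplying by $2^{\widetilde{s}_1 k_1+\widetilde{s}_2 k_2}$ and taking the supremum over $\boldsymbol{k}\in\N_0^2$ gives $\|m\|_{B^{(\widetilde{s}_1,\widetilde{s}_2)}_i}\lesssim\|m\|_{W^{(\widetilde{s}_1,\widetilde{s}_2)}_i}$.

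For the second inequality, I would decompose $m=\sum_{\boldsymbol{k}\in\N_0^2}\Delta_{\boldsymbol{k}}m$ in $\calS'(\R^{2n})$ and pick a thickened bump $\widetilde{\psi}_k\in C_c^{\infty}(\R^n)$ equal to $1$ on $\supp\psi_k$ and supported on the same dyadic scale, so that $\Delta_{\boldsymbol{k}}m=\widetilde{\psi}_{k_1}(D_{\xi_1})\widetilde{\psi}_{k_2}(D_{\xi_2})\Delta_{\boldsymbol{k}}m$. Then
\[
\langle D_{\xi_1}\rangle^{\widetilde{s}_1}\langle D_{\xi_2}\rangle^{\widetilde{s}_2}\Delta_{\boldsymbol{k}}m=\bigl[\widetilde{\psi}_{k_1}(y_1)\langle y_1\rangle^{\widetilde{s}_1}\bigr](D_{\xi_1})\,\bigl[\widetilde{\psi}_{k_2}(y_2)\langle y_2\rangle^{\widetilde{s}_2}\bigr](D_{\xi_2})\,\Delta_{\boldsymbol{k}}m,
\]
and applying the kernel bound with exponent $t=\widetilde{s}_i$ together with Minkowski as above yields
\[
\|\langle D_{\xi_1}\rangle^{\widetilde{s}_1}\langle D_{\xi_2}\rangle^{\widetilde{s}_2}\Delta_{\boldsymbol{k}}m\|_{W^{(0,0)}_i}\lesssim 2^{(\widetilde{s}_1-s_1)k_1+(\widetilde{s}_2-s_2)k_2}\|m\|_{B^{(s_1,s_2)}_i}.
\]
The triangle inequality then bounds $\|m\|_{W^{(\widetilde{s}_1,\widetilde{s}_2)}_i}$ by the sum of the right-hand side over $\boldsymbol{k}\in\N_0^2$, which is a convergent geometric series because $\widetilde{s}_i<s_i$.

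The hard part should be purely technical: confirming the uniform $L^{1}$-bound for $\psi_k(D)\langle D\rangle^{t}$ with arbitrary real exponent $t$. The subtlety is that $\langle y\rangle^{t}$ need not be smooth at $y=0$ when $t$ is negative or non-integer, but this is harmless since $\psi_k$ vanishes near the origin for $k\ge 1$, while the $k=0$ piece reduces to smoothness of $\psi_0\langle\cdot\rangle^{t}$ on its compact support. Beyond this the argument is standard, and the tensor-product structure of the kernels allows Minkowski's inequality to be iterated in $\xi_1$ and $\xi_2$ independently, so the proof applies uniformly to both flavors $i=1,2$ of the mixed norm.
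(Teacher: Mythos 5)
Your proof is correct and follows essentially the same approach as the paper: both arguments reduce everything to the $L^1$ bound $\|\calF^{-1}[\psi_k\langle\cdot\rangle^t]\|_{L^1}\lesssim 2^{tk}$ for the tensor-product convolution kernels, use thickened bumps $\widetilde{\psi}_k$ for the second inequality, and close with mixed-norm Young (Benedek--Panzone), which is exactly the iterated Minkowski argument you describe. The only cosmetic differences are that you prove the kernel $L^1$ bound by dilation rather than integration by parts, and you isolate the one-dimensional kernel estimate explicitly; neither changes the substance.
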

%%%==================================================================
\begin{proof}
By symmetry, it is sufficient to consider the case $i=2$
in \eqref{equiv-norm}.
We first prove the latter inequality in \eqref{equiv-norm}.
Let $\widetilde{\psi}_0, \widetilde{\psi}$
be such that $\widetilde{\psi}_0=1$ on $\{|\xi| \le 2\}$,
$\mathrm{supp}\, \widetilde{\psi}_0 \subset \{|\xi| \le 4\}$,
$\widetilde{\psi}=1$ on $\{1/2 \le |\xi| \le 2\}$,
and
$\mathrm{supp}\, \widetilde{\psi} \subset \{1/4 \le |\xi| \le 4\}$.
Set $\widetilde{\psi}_k(\xi)=\widetilde{\psi}(2^{-k}\xi)$, $k \ge 1$,
and note that $\psi_{k}=\widetilde{\psi}_k \psi_k$, $k \ge 0$.
We write
$\langle D_{\xi_1} \rangle^{\widetilde{s}_1}
\langle D_{\xi_2} \rangle^{\widetilde{s}_2}m$ as
\begin{equation}\label{equiv-norm-(1)}
\begin{split}
&\langle D_{\xi_1} \rangle^{\widetilde{s}_1}
\langle D_{\xi_2} \rangle^{\widetilde{s}_2}
m(\xi_1,\xi_2)
=\sum_{\boldsymbol{k} \in \N_0^2}
\langle D_{\xi_1} \rangle^{\widetilde{s}_1}
\langle D_{\xi_2} \rangle^{\widetilde{s}_2}
\Delta_{\boldsymbol{k}}m(\xi_1,\xi_2)
\\
&=\sum_{\boldsymbol{k} \in \N_0^2}
\calF^{-1}_{(y_1,y_2) \to (\xi_1,\xi_2)}
\left[\langle y_1 \rangle^{\widetilde{s}_1}
\langle y_2 \rangle^{\widetilde{s}_2}
\widetilde{\psi}_{k_1}(y_1)\widetilde{\psi}_{k_2}(y_2)
\psi_{k_1}(y_1)\psi_{k_2}(y_2)\widehat{m}(y_1,y_2)\right]
\\
&=\sum_{\boldsymbol{k} \in \N_0^2}
(\calF^{-1}\tau_{\boldsymbol{k}}
^{(\widetilde{s}_1,\widetilde{s}_2)})
*(\Delta_{\boldsymbol{k}}m)(\xi_1,\xi_2)
\end{split}
\end{equation}
with
\[
\tau_{\boldsymbol{k}}
^{(\widetilde{s}_1,\widetilde{s}_2)}(y_1,y_2)
=\langle y_1 \rangle^{\widetilde{s}_1}
\langle y_2 \rangle^{\widetilde{s}_2}
\widetilde{\psi}_{k_1}(y_1)\widetilde{\psi}_{k_2}(y_2) .
\]
Since
$|\partial^{\alpha_i}_{y_i} \langle y_i \rangle^{\widetilde{s}_i}|
\lesssim \langle y_i \rangle^{\widetilde{s}_i-|\alpha_i|}$
and $\langle y_i \rangle \approx 2^{k_i}$
for $y_i \in \mathrm{supp}\, \widetilde{\psi}_{k_i}$,
we have
\[
|\partial^{\alpha_1}_{y_1}\partial^{\alpha_2}_{y_2}
\tau_{\boldsymbol{k}}
^{(\widetilde{s}_1,\widetilde{s}_2)}(y_1,y_2)|
\lesssim 2^{k_1(\widetilde{s}_1-|\alpha_1|)}
2^{k_2(\widetilde{s}_2-|\alpha_2|)}
\ichi_{\{|y_1| \lesssim 2^{k_1}, \, |y_2| \lesssim 2^{k_2}\}} .
\]
Then, by integration by parts,
\[
|\calF^{-1}\tau_{\boldsymbol{k}}
^{(\widetilde{s}_1,\widetilde{s}_2)}(\xi_1,\xi_2)|
\lesssim \prod_{i=1}^2 2^{k_i(\widetilde{s}_i+n)}(1+2^{k_i}|\xi_i|)^{-N_i},
\]
where $N_i$ is a positive integer
satisfying $N_i>n$ for $i=1,2$,
and consequently
\begin{equation}\label{equiv-norm-(3)}
\|\calF^{-1}\tau_{\boldsymbol{k}}
^{(\widetilde{s}_1,\widetilde{s}_2)}\|_{L^1}
\lesssim 2^{k_1\widetilde{s}_1+k_2\widetilde{s}_2}.
\end{equation}
Hence, it follows from \eqref{equiv-norm-(1)},
\eqref{equiv-norm-(3)}, and Young's inequality with mixed norm
(\cite[Part II, Theorem 1]{Benedek-Panzone}) that
\begin{align*}
&\left\|\left\|\langle D_{\xi_1}
\rangle^{\widetilde{s}_1}
\langle D_{\xi_2}
\rangle^{\widetilde{s}_2}m(\xi_1,\xi_2)
\right\|_{L^2_{\xi_2}}\right\|_{L^{\infty}_{\xi_1}}
\le \sum_{\boldsymbol{k} \in \N_0^2}
\|\calF^{-1}\tau_{\boldsymbol{k}}
^{(\widetilde{s}_1,\widetilde{s}_2)}\|_{L^1}
\left\|\left\|\Delta_{\boldsymbol{k}}m(\xi_1,\xi_2)
\right\|_{L^2_{\xi_2}}\right\|_{L^{\infty}_{\xi_1}}
\\
&\lesssim
\left(\sup_{\boldsymbol{k} \in \N_0^2}
2^{k_1s_1+k_2s_2}
\left\|\left\|\Delta_{\boldsymbol{k}}m(\xi_1,\xi_2)
\right\|_{L^2_{\xi_2}}\right\|_{L^{\infty}_{\xi_1}}\right)
\sum_{\boldsymbol{k} \in \N_0^2}
2^{k_1(\widetilde{s}_1-s_1)+k_2(\widetilde{s}_2-s_2)}.
\end{align*}
By the assumption $\widetilde{s}_i<s_i$, $i=1,2$,
this implies the latter inequality in \eqref{equiv-norm}.

We next consider the former inequality in \eqref{equiv-norm}.
The function $\Delta_{\boldsymbol{k}}m$ can be written as
\begin{align*}
\Delta_{\boldsymbol{k}}m(\xi_1,\xi_2)
&=\Delta_{\boldsymbol{k}}
\langle D_{\xi_1} \rangle^{-\widetilde{s}_1}
\langle D_{\xi_2} \rangle^{-\widetilde{s}_2}
\langle D_{\xi_1} \rangle^{\widetilde{s}_1}
\langle D_{\xi_2} \rangle^{\widetilde{s}_2}
m(\xi_1,\xi_2)
\\
&=(\calF^{-1}\sigma_{\boldsymbol{k}}^{(-\widetilde{s}_1,-\widetilde{s}_2)})*
(\langle D_{\xi_1} \rangle^{\widetilde{s}_1}
\langle D_{\xi_2} \rangle^{\widetilde{s}_2}m)(\xi_1,\xi_2)
\end{align*}
with
\[
\sigma_{\boldsymbol{k}}^{(-\widetilde{s}_1,-\widetilde{s}_2)}(y_1,y_2)
=\langle y_1 \rangle^{-\widetilde{s}_1}\langle y_2 \rangle^{-\widetilde{s}_2}
\psi_{k_1}(y_1)\psi_{k_2}(y_2) .
\]
Since \eqref{equiv-norm-(3)} has been proved
without any assumption on $\widetilde{s}_i$,
and  $\psi_{k_i}$ is essentially the same as $\widetilde{\psi}_{k_i}$,
we have
\[
\|\calF^{-1}\sigma_{\boldsymbol{k}}^{(-\widetilde{s}_1,-\widetilde{s}_2)}\|_{L^1}
\lesssim 2^{-(k_1\widetilde{s}_1+k_2\widetilde{s}_2)}.
\]
Therefore, by Young's inequality with mixed norm,
\[
\Big\|\|\Delta_{\boldsymbol{k}}m(\xi_1,\xi_2)
\|_{L^2_{\xi_2}}\Big\|_{L^{\infty}_{\xi_1}}
\lesssim
2^{-(k_1\widetilde{s}_1+k_2\widetilde{s}_2)}
\Big\|\|\langle D_{\xi_1} \rangle^{\widetilde{s}_1}
\langle D_{\xi_2} \rangle^{\widetilde{s}_2}m(\xi_1,\xi_2)
\|_{L^2_{\xi_2}}\Big\|_{L^{\infty}_{\xi_1}},
\]
which gives the former inequality in \eqref{equiv-norm}.
\end{proof}
%%%==================================================================

%%%==================================================================
\begin{rem}\label{rem-equiv-norm}
In the same way as in the proof of
Proposition \ref{prop-equiv-norm},
we can prove that
\begin{equation}\label{rem-equiv-norm-1}
\|m\|_{W^{(\widetilde{s}_1,\widetilde{s}_2)}_2}
\lesssim
\|m\|_{W^{(s_1,s_2)}_2}
\end{equation}
for each $\widetilde{s}_i \le s_i$, $i=1,2$,
and
\begin{equation}\label{rem-equiv-norm-2}
\sup_{k_1 \in \N_0}\left(
2^{k_1s_1}\left\|\left\|\psi_{k_1}(D_{\xi_1})
\langle D_{\xi_2}\rangle^{s_2}
m(\xi_1,\xi_2)\right\|_{L^2_{\xi_2}}
\right\|_{L^{\infty}_{\xi_1}}\right)
\lesssim
\|m\|_{W^{(s_1,s_2)}_2}
\end{equation}
for each $s_1, s_2 \in \R$.
(Similar assertions also hold for
the $W^{(s_1,s_2)}_1$-norm
by symmetry.)
In fact, for the former assertion,
instead of \eqref{equiv-norm-(1)} we write
\begin{align*}
\langle D_{\xi_1} \rangle^{\widetilde{s}_1}
\langle D_{\xi_2} \rangle^{\widetilde{s}_2}m
=\sum_{\boldsymbol{k} \in \N_0^2}
(\calF^{-1}\sigma_{\boldsymbol{k}}^{(\widetilde{s}_1-s_1,
\widetilde{s}_2-s_2)})
*(\langle D_{\xi_1} \rangle^{s_1}
\langle D_{\xi_2} \rangle^{s_2}m)
\end{align*}
with
$\sigma_{\boldsymbol{k}}^{(\widetilde{s}_1-s_1,
\widetilde{s}_2-s_2)}(y_1,y_2)=
\langle y_1 \rangle^{\widetilde{s}_1-s_1}
\langle y_2 \rangle^{\widetilde{s}_2-s_2}
\psi_{k_1}(y_1)\psi_{k_2}(y_2)$,
where we do not decompose
$\langle D_{\xi_1} \rangle^{\widetilde{s}_1}
\langle D_{\xi_2} \rangle^{\widetilde{s}_2}m(\xi_1,\xi_2)$
with respect to the $\xi_i$-variable
if $\widetilde{s}_i=s_i$,
and instead of \eqref{equiv-norm-(3)} we use
\[
\|\calF^{-1}\sigma_{\boldsymbol{k}}^{(\widetilde{s}_1-s_1,
\widetilde{s}_2-s_2)}\|_{L^1}
\lesssim 2^{k_1(\widetilde{s}_1-s_1)+k_2(\widetilde{s}_2-s_2)}.
\]
For the latter assertion,
we write
\[
\psi_{k_1}(D_{\xi_1})
\langle D_{\xi_2}\rangle^{s_2}m(\xi_1,\xi_2)
=\int_{\R^n}
\calF^{-1}\sigma_{k_1}^{-s_1}(\eta_1)
(\langle D_{\xi_1} \rangle^{s_1}
\langle D_{\xi_2} \rangle^{s_2}m)(\xi_1-\eta_1,\xi_2)\,
d\eta_1
\]
with $\sigma_{k_1}^{-s_1}(y_1)=\langle y_1 \rangle^{-s_1}\psi_{k_1}(y_1)$,
and use
$\|\calF^{-1}\sigma_{k_1}^{-s_1}\|_{L^1}
\lesssim 2^{-k_1s_1}$.
\end{rem}
%%%==================================================================

Making a slight modification on the proof of
{\cite[Lemma 3.4]{Grafakos-Miyachi-Tomita}
or \cite[Lemma 3.1]{Miyachi-Tomita-TMJ}},
and using the fact of pointwise multipliers
in Besov spaces of product type
(\cite[Theorem 1.4]{Sugimoto}),
\begin{equation}\label{pointwise-multipliers-Besov}
\|\Phi m\|_{B^{(s_1,s_2)}_i}
\lesssim
\left(\sup_{\boldsymbol{k} \in \N_0^2}
2^{\boldsymbol{k} \cdot \boldsymbol{s}}
\left\|\Delta_{\boldsymbol{k}}\Phi(\xi_1,\xi_2)
\right\|_{L^\infty(\R^n_{\xi_1,\xi_2})}\right)
\|m\|_{B^{(s_1,s_2)}_i},
\quad i=1,2,
\end{equation}
where $s_i$ is a positive number for $i=1,2$,
we can prove the following.
%%%==================================================================
\begin{lem}\label{remove-cut}
Let $\widetilde{\Psi} \in \calS(\R^{2n})$
be such that $\mathrm{supp}\, \widetilde{\Psi}$
is a compact set of $\R^{2n} \setminus \{0\}$.
Assume that $\Phi \in C^{\infty}(\R^{2n}\setminus\{0\})$
satisfies
\[
|\partial^{\alpha_1}_{\xi_1}\partial^{\alpha_2}_{\xi_2}\Phi(\xi_1,\xi_2)|
\le C_{\alpha_1,\alpha_2}
(|\xi_1|+|\xi_2|)^{-(|\alpha_1|+|\alpha_2|)},
\quad (\xi_1,\xi_2) \neq (0,0),
\]
for each $(\alpha_1,\alpha_2) \in (\N_0^n)^2$.
For $m \in L^{\infty}(\R^{2n})$ and $j \in \Z$,
set
\[
\widetilde{m}_j(\xi_1,\xi_2)
=m(2^j\xi_1,2^j\xi_2)\Phi(2^j\xi_1,2^j\xi_2)
\widetilde{\Psi}(\xi_1,\xi_2),
\]
and define $m_j$ by \eqref{def-mj}.
Then the following hold.
\begin{enumerate}
\item
For each $s_i>0$, $i=1,2$, 
\[
\sup_{j \in \Z}\|\widetilde{m}_j\|_{B^{(s_1,s_2)}_i}
\lesssim \sup_{j \in \Z}\|m_j\|_{B^{(s_1,s_2)}_i},
\quad i=1,2.
\]
\item
For each $s_i > \widetilde{s}_i \ge 0$, $i=1,2$,
\begin{equation}\label{remove-cut-2}
\sup_{j \in \Z}
\|\widetilde{m}_j\|_{W^{(\widetilde{s}_1,\widetilde{s}_2)}_i}
\lesssim \sup_{j \in \Z}
\|m_j\|_{W^{(s_1,s_2)}_i},
\quad i=1,2.
\end{equation}
\end{enumerate}
\end{lem}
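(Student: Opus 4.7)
My plan is to start from a decomposition of $\widetilde{m}_j$ using the partition of unity $1 = \sum_{k \in \Z}\Psi(\cdot/2^k)$ on $\R^{2n}\setminus\{0\}$. Because $\mathrm{supp}\,\widetilde{\Psi}$ is a fixed compact set not meeting the origin, only finitely many $k$, forming a set $I\subset\Z$ depending on $\widetilde{\Psi}$ alone, give a nonzero contribution. The identity $m(2^j\xi)\Psi(\xi/2^k) = m_{j+k}(\xi/2^k)$ then yields
\[
\widetilde{m}_j(\xi) = \sum_{k\in I} m_{j+k}(\xi/2^k)\,\Phi_j(\xi),
\qquad \Phi_j(\xi) := \Phi(2^j\xi)\,\widetilde{\Psi}(\xi).
\]
The decisive observation is that $\{\Phi_j\}_{j \in \Z}$ forms a bounded family in $C^\infty_c(\R^{2n})$ with common support contained in $\mathrm{supp}\,\widetilde{\Psi}$: on that support $|(\xi_1,\xi_2)|\approx 1$, so the scale-invariant bound on $\partial^\alpha \Phi$ combined with the chain rule produces $|\partial^\alpha \Phi_j(\xi)|\lesssim 1$ uniformly in $j$.

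For assertion (1), I would apply the pointwise multiplier estimate \eqref{pointwise-multipliers-Besov} to each summand. Its hypothesis asks for a uniform-in-$j$ bound on $\sup_{\boldsymbol{k}\in\N_0^2}2^{\boldsymbol{k}\cdot\boldsymbol{s}}\|\Delta_{\boldsymbol{k}}\Phi_j\|_{L^\infty}$, which follows from the rapid decay of the Littlewood--Paley pieces of the Schwartz bump $\Phi_j$ (standard integration by parts against $\widehat{\Phi_j}$, uniform in $j$). This gives $\|\widetilde{m}_j\|_{B^{(s_1,s_2)}_i}\lesssim\sum_{k\in I}\|m_{j+k}(\cdot/2^k)\|_{B^{(s_1,s_2)}_i}$. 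Since $k$ ranges over a fixed finite set, the dilation $f\mapsto f(\cdot/2^k)$ merely shifts Littlewood--Paley indices by $k$ and introduces a bounded Jacobian, so $\|m_{j+k}(\cdot/2^k)\|_{B^{(s_1,s_2)}_i}\lesssim_I \|m_{j+k}\|_{B^{(s_1,s_2)}_i}$. Taking $\sup_j$ and reindexing $\ell = j+k$ finishes (1).

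For assertion (2), I would sandwich $W$-norms by $B$-norms via Proposition~\ref{prop-equiv-norm}. Pick any $s_i''$ with $\widetilde{s}_i < s_i'' < s_i$; since $s_i > \widetilde{s}_i \ge 0$ one can also arrange $s_i''>0$. Then
\[
\|\widetilde{m}_j\|_{W^{(\widetilde{s}_1,\widetilde{s}_2)}_i}
\lesssim \|\widetilde{m}_j\|_{B^{(s_1'',s_2'')}_i}
\lesssim \sup_\ell \|m_\ell\|_{B^{(s_1'',s_2'')}_i}
\lesssim \sup_\ell \|m_\ell\|_{W^{(s_1'',s_2'')}_i}
\lesssim \sup_\ell \|m_\ell\|_{W^{(s_1,s_2)}_i},
\]
where the first and third inequalities are Proposition~\ref{prop-equiv-norm}, the second is part (1) applied with the positive exponents $s_i''$, and the last is \eqref{rem-equiv-norm-1} from Remark~\ref{rem-equiv-norm}. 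The main obstacle is the uniform-in-$j$ pointwise multiplier bound on $\{\Phi_j\}$ in $B^{(s_1,s_2)}_i$: this rests on the cancellation between the factor $2^{j|\alpha|}$ produced by differentiating $\Phi(2^j\cdot)$ and the decay $(2^j|\xi|)^{-|\alpha|}$ coming from the hypothesis on $\Phi$, a cancellation available only because $\widetilde{\Psi}$ confines $\xi$ to $|\xi|\approx 1$; the positivity $s_i>0$ in (1) enters precisely here, in order to invoke \eqref{pointwise-multipliers-Besov}.
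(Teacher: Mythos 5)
Your proof is correct and follows essentially the same route as the paper: decompose $\widetilde{m}_j$ via the Littlewood--Paley partition of unity into finitely many dilated copies of $m_{j+k}$, invoke the pointwise multiplier estimate \eqref{pointwise-multipliers-Besov} with a uniform-in-$j$ bound on $\Phi_j(\xi)=\Phi(2^j\xi)\widetilde{\Psi}(\xi)$, and deduce assertion (2) from assertion (1) by sandwiching $W$-norms between $B$-norms using Proposition~\ref{prop-equiv-norm}. The only cosmetic differences are that you abstract the index range to a finite set $I$ instead of $\{-k_0,\dots,k_0\}$ and introduce intermediate exponents $s_i''$ in part (2), where the paper passes directly through $B^{(s_1,s_2)}_i$ (permissible since $s_i>\widetilde s_i\ge 0$ already forces $s_i>0$).
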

%%%==================================================================
\begin{proof}
We assume that
$\mathrm{supp}\, \widetilde{\Psi} \subset
\{2^{-k_0} \le |\xi| \le 2^{k_0}\}$
for some $k_0 \ge 1$,
where $\xi=(\xi_1,\xi_2) \in (\R^n)^2$.
Let $\Psi$ be as in \eqref{LP-homogeneous} with $d=2n$.
Then, since
$\mathrm{supp}\, \Psi(2^{-k}\xi) \subset
\{2^{k-1} \le |\xi| \le 2^{k+1}\}$,
\begin{align*}
\widetilde{m}_j(\xi)
=\sum_{k=-k_0}^{k_0}
\Phi(2^j\xi)\widetilde{\Psi}(\xi)
m(2^j\xi)\Psi(2^{-k}\xi)
=\sum_{k=-k_0}^{k_0}\Phi_j(\xi)m_{j+k}(2^{-k}\xi),
\end{align*}
where $m_{j+k}$ is defined by \eqref{def-mj}
and $\Phi_j(\xi)=\Phi(2^j\xi)\widetilde{\Psi}(\xi)$.
Hence, by \eqref{pointwise-multipliers-Besov},
\[
\|\widetilde{m}_j\|_{B^{(s_1,s_2)}_i}
\lesssim
\sum_{k=-k_0}^{k_0}
\left(\sup_{\boldsymbol{\ell} \in \N_0^2}
2^{\boldsymbol{\ell} \cdot \boldsymbol{s}}
\left\|\Delta_{\boldsymbol{\ell}}\Phi_j
\right\|_{L^\infty}\right)
\|m_{j+k}(2^{-k}\cdot)\|_{B^{(s_1,s_2)}_i}.
\]
Therefore, combining this with
\[
\sup_{\boldsymbol{\ell} \in \N_0^2}
2^{\boldsymbol{\ell} \cdot \boldsymbol{s}}
\left\|\Delta_{\boldsymbol{\ell}}\Phi_j
\right\|_{L^\infty}
\lesssim \sup_{j \in \Z}
\left(\max_{|\alpha_i| \le [s_i]+1, \, i=1,2}
\|\partial^{\alpha_1}_{\xi_1}
\partial^{\alpha_2}_{\xi_2}\Phi_j\|_{L^{\infty}}\right)
<\infty
\]
(see e.g., \cite[Theorem 2.3.8]{Triebel})
and
\begin{align*}
\|m_{j+k}(2^{-k}\cdot)\|_{B^{(s_1,s_2)}_i}
&\lesssim (2^{-k})^{-n/2}
\left(\max\{1,2^{-k}\}\right)^{s_1+s_2}
\|m_{j+k}\|_{B^{(s_1,s_2)}_i}
\\
&\lesssim
\sup_{j \in \Z}\|m_{j}\|_{B^{(s_1,s_2)}_i}
\end{align*}
for $|k| \le k_0$
(see \cite[Proposition 1.1]{Sugimoto} for the first inequality),
we have the assertion (1).

By using the facts $s_i > \widetilde{s}_i$ and $s_i>0$,
it follows from Proposition \ref{prop-equiv-norm} and the assertion (1) that
\[
\|\widetilde{m}_j\|_{W^{(\widetilde{s}_1,\widetilde{s}_2)}_i}
\lesssim
\|\widetilde{m}_j\|_{B^{(s_1,s_2)}_i}
\lesssim
\sup_{j \in \Z}\|m_j\|_{B^{(s_1,s_2)}_i}
\lesssim
\sup_{j \in \Z}\|m_j\|_{W^{(s_1,s_2)}_i},
\]
which gives the assertion (2).
\end{proof}
%%%==================================================================

In the proof of Theorem 1.1, 
we shall derive the 
$L^2 \times L^2 \to L^1$ estimate 
from the $L^2 \times L^\infty \to L^2$ estimate 
by using duality 
(see Subsection \ref{section4.2}).
To do this,
we introduce
\begin{equation}\label{dual-form}
m^{*1}(\xi_1,\xi_2)
=m(-\xi_1-\xi_2, \xi_2),
\qquad
m^{*2}(\xi_1,\xi_2)
=m(\xi_1, -\xi_1-\xi_2),
\end{equation}
and prove the following.

%%%==================================================================
\begin{lem}\label{change-variable}
\begin{enumerate}
\item
For each $s_i>0$, $i=1,2$,
\[
\|m^{*1}\|_{B^{(s_1,s_2)}_1}
\lesssim
\|m\|_{B^{(s_1+s_2,s_2)}_1},
\quad
\|m^{*2}\|_{B^{(s_1,s_2)}_2}
\lesssim
\|m\|_{B^{(s_1,s_1+s_2)}_2}.
\]
\item
For each $s_i > \widetilde{s}_i \ge 0$, $i=1,2$,
\begin{equation}\label{change-2}
\|m^{*1}\|_{W^{(\widetilde{s}_1,\widetilde{s}_2)}_1}
\lesssim
\|m\|_{W^{(s_1+s_2,s_2)}_1},
\quad
\|m^{*2}\|_{W^{(\widetilde{s}_1,\widetilde{s}_2)}_2}
\lesssim
\|m\|_{W^{(s_1,s_1+s_2)}_2}.
\end{equation}
\end{enumerate}
\end{lem}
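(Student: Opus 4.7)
The plan is to establish (1) by a Littlewood--Paley argument on the Fourier side and then deduce (2) from (1) via the norm equivalence in Proposition \ref{prop-equiv-norm}. By the coordinate-swap symmetry exchanging $\xi_1$ and $\xi_2$, which converts the $m^{*2}$ estimates in the ``2''-norms into the $m^{*1}$ estimates in the ``1''-norms (with the two smoothness indices interchanged), it suffices to prove the two inequalities for $m^{*2}$.

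For (1), the starting point is the Fourier identity $\widehat{m^{*2}}(y_1, y_2) = \widehat{m}(y_1 - y_2, -y_2)$, obtained by the substitution $(\eta_1, \eta_2) = (\xi_1, -\xi_1 - \xi_2)$. Inserting the Littlewood--Paley decomposition of $\widehat{m}$ gives
\[
\Delta_{\boldsymbol{k}} m^{*2} = \sum_{\boldsymbol{\ell} \in \N_0^2} \Delta_{\boldsymbol{k}}\bigl((\Delta_{\boldsymbol{\ell}} m)^{*2}\bigr).
\]
Two simple observations bound each summand: the substitution $\xi_2 \mapsto -\xi_1 - \xi_2$ is an $L^2_{\xi_2}$-isometry for each fixed $\xi_1$, so $g \mapsto g^{*2}$ preserves $\|\cdot\|_{L^{\infty}_{\xi_1}(L^2_{\xi_2})}$; and $\Delta_{\boldsymbol{k}}$ is bounded on $L^{\infty}_{\xi_1}(L^2_{\xi_2})$ uniformly in $\boldsymbol{k}$ by Young's inequality with mixed norm. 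Hence
\[
\bigl\|\Delta_{\boldsymbol{k}}\bigl((\Delta_{\boldsymbol{\ell}} m)^{*2}\bigr)\bigr\|_{L^{\infty}_{\xi_1}(L^2_{\xi_2})}
\lesssim \|\Delta_{\boldsymbol{\ell}} m\|_{L^{\infty}_{\xi_1}(L^2_{\xi_2})}
\le 2^{-\ell_1 s_1 - \ell_2(s_1+s_2)} \|m\|_{B^{(s_1, s_1+s_2)}_2}.
\]

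The crucial step is now a support analysis. The Fourier support of $(\Delta_{\boldsymbol{\ell}} m)^{*2}$ is contained in $\{|y_1 - y_2|\approx 2^{\ell_1},\, |y_2|\approx 2^{\ell_2}\}$, so the $\boldsymbol{\ell}$-summand vanishes unless $|\ell_2 - k_2| \le C$ and the three annuli $|y_1| \approx 2^{k_1}$, $|y_1 - y_2| \approx 2^{\ell_1}$, $|y_2| \approx 2^{k_2}$ are simultaneously realizable. Elementary geometry yields three regimes: (A) if $k_1 \ge k_2 + C$ then $|\ell_1 - k_1| \le C$; (B) if $k_1 \le k_2 - C$ then $|\ell_1 - k_2| \le C$; (C) if $|k_1 - k_2| \le C$ then $\ell_1$ may range over $\{0, 1, \dots, k_2 + C\}$. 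In (A) and (B) only $O(1)$ pairs $\boldsymbol{\ell}$ contribute. In (C) the geometric series $\sum_{\ell_1 \ge 0} 2^{-\ell_1 s_1}$ converges thanks to $s_1 > 0$, and the weight is controlled by $2^{k_1 s_1 + k_2 s_2} \le 2^{Cs_1} 2^{k_2(s_1+s_2)}$. In every regime one obtains $2^{k_1 s_1 + k_2 s_2}\|\Delta_{\boldsymbol{k}} m^{*2}\|_{L^{\infty}_{\xi_1}(L^2_{\xi_2})} \lesssim \|m\|_{B^{(s_1, s_1+s_2)}_2}$; taking the supremum over $\boldsymbol{k}$ proves (1).

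For (2), choose auxiliary exponents $s_i'$ with $\widetilde{s}_i < s_i' < s_i$, so that $s_i' > 0$ and $s_1' + s_2' < s_1 + s_2$. Combining Proposition \ref{prop-equiv-norm}, part (1) applied with exponents $(s_1', s_2')$, Proposition \ref{prop-equiv-norm} again, and \eqref{rem-equiv-norm-1} yields the chain
\[
\|m^{*2}\|_{W^{(\widetilde{s}_1, \widetilde{s}_2)}_2}
\lesssim \|m^{*2}\|_{B^{(s_1', s_2')}_2}
\lesssim \|m\|_{B^{(s_1', s_1'+s_2')}_2}
\lesssim \|m\|_{W^{(s_1', s_1'+s_2')}_2}
\lesssim \|m\|_{W^{(s_1, s_1+s_2)}_2}.
\]
The principal technical obstacle is regime (C) of the support analysis: there $\ell_1$ can be as small as $0$ while $k_1, k_2$ are both large, and the $s_2$-surplus in the second $\xi$-variable on the right-hand side is exactly what compensates for the weight $2^{k_2 s_2}$ once the $\ell_1$-summation is absorbed.
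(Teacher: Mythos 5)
Your proof is correct. It takes essentially the same route as the paper's: reduce to the $m^{*2}$ estimate by symmetry, exploit the Fourier identity $\widehat{m^{*2}}(y_1,y_2)=\widehat{m}(y_1-y_2,-y_2)$, decompose in the first frequency variable of $m$, analyze the resulting annulus overlaps, and close the estimate by summing a geometric series using $s_1>0$; part (2) is then deduced from part (1) exactly as in the paper via Proposition~\ref{prop-equiv-norm} and \eqref{rem-equiv-norm-1}. The only structural difference is in the packaging of the core estimate: the paper inserts a single extra Littlewood--Paley index $\widetilde{k}_1$ and then computes explicitly, writing $\psi_{k_1}(y_1-y_2)$ via Fourier inversion as an $\eta$-integral and controlling it by Minkowski's inequality; you instead decompose $m$ fully as $\sum_{\boldsymbol{\ell}}\Delta_{\boldsymbol{\ell}}m$, observe that $\ell_2$ collapses to $k_2$ by support, and replace the integral manipulation by two operator-level observations (that $g\mapsto g^{*2}$ is an $L^\infty_{\xi_1}(L^2_{\xi_2})$-isometry and that $\Delta_{\boldsymbol{k}}$ is uniformly bounded on that mixed-norm space by Benedek--Panzone). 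Your three-regime case split in $k_1$ versus $k_2$ is a cosmetic reorganization of the paper's split in $\widetilde{k}_1$ versus $k_2$; both are driven by the same geometry, and in both the crucial regime is the one where $\ell_1$ (resp. $\widetilde{k}_1$) ranges over a full interval of length $\approx k_2$, which is where the $s_1$-surplus absorbed into the second index pays off. The operator-level packaging is arguably cleaner, but the two arguments buy the same thing.
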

%%%==================================================================
\begin{proof}
By symmetry,
it is sufficient to consider the estimate for $m^{*2}$.
Moreover, we may assume that
each function of $\{\psi_{k_i}\}_{k_i \in \N_0}$
appearing in the definition of $\Delta_{\boldsymbol{k}}$
is even, namely $\psi_{k_i}(y_i)=\psi_{k_i}(-y_i)$.
By a change of variables,
\begin{align*}
\Delta_{\boldsymbol{k}}m^{*2}(\xi_1,\xi_2)
&=\frac{1}{(2\pi)^{2n}}\int_{(\R^n)^2}
e^{i(\xi_1\cdot y_1+\xi_2\cdot y_2)}
\psi_{k_1}(y_1)\psi_{k_2}(y_2)
\widehat{m}(y_1-y_2,-y_2)\, dy_1 dy_2
\\
&=\frac{1}{(2\pi)^{2n}}\int_{(\R^n)^2}
e^{i(\xi_1\cdot y_1-(\xi_1+\xi_2)\cdot y_2)}
\psi_{k_1}(y_1-y_2)\psi_{k_2}(y_2)
\widehat{m}(y_1,y_2)\, dy_1 dy_2 ,
\end{align*}
and then
\begin{align*}
&\big\|\|\Delta_{\boldsymbol{k}}m^{*2}(\xi_1,\xi_2)
\|_{L^2_{\xi_2}}\big\|_{L^{\infty}_{\xi_1}}
\\
&\le
\bigg\|\Big\|\int_{(\R^n)^2}
e^{i(\xi_1\cdot y_1+\xi_2 \cdot y_2)}
\psi_{k_1}(y_1-y_2)\psi_{k_2}(y_2)
\widehat{m}(y_1,y_2)\, dy_1 dy_2\Big\|_{L^2_{\xi_2}}
\bigg\|_{L^{\infty}_{\xi_1}} .
\end{align*}
We decompose the function inside the
$L^{\infty}_{\xi_1}(L^2_{\xi_2})$-norm
in the right hand side of the above inequality as
\begin{align*}
&\sum_{\widetilde{k}_1 \in \N_0}
\int_{(\R^n)^2}
e^{i(\xi_1\cdot y_1+\xi_2 \cdot y_2)}
\psi_{k_1}(y_1-y_2)\psi_{\widetilde{k}_1}(y_1)\psi_{k_2}(y_2)
\widehat{m}(y_1,y_2) dy_1 dy_2
\\
&=\Big(\sum_{\widetilde{k}_1 \le k_2+2}
+\sum_{\widetilde{k}_1 > k_2+2}\Big)
\int_{(\R^n)^2}
e^{i(\xi_1\cdot y_1+\xi_2 \cdot y_2)}
\psi_{k_1}(y_1-y_2)\psi_{\widetilde{k}_1}(y_1)\psi_{k_2}(y_2)
\widehat{m}(y_1,y_2) dy_1 dy_2 .
\end{align*}
It follows from the support property of $\psi_{k_i}$ that
if $y_1 \in \mathrm{supp}\, \psi_{\widetilde{k}_1}$
and $y_2 \in \mathrm{supp}\, \psi_{k_2}$,
then $|y_1-y_2| \le 2^{k_2+4}$ for $\widetilde{k}_1 \le k_2+2$
and $2^{\widetilde{k}_1-2} \le |y_1-y_2| \le 2^{\widetilde{k}_1+2}$
for $\widetilde{k}_1 > k_2+2$.
Thus, we can restrict the above sums to
\[
\Bigg(\sum_{\substack{\widetilde{k}_1 \in \N_0 \,:\,
\widetilde{k}_1 \le k_2+2 \\ k_1-1 < k_2+4}}
+\sum_{\substack{\widetilde{k}_1\in \N_0 \,:\,
\widetilde{k}_1 > k_2+2 \\
k_1-1<\widetilde{k}_1+2, \, k_1+1>\widetilde{k}_1-2}}\Bigg)
\cdots \cdots .
\]

By the Fourier inversion formula,
\begin{align*}
&\int_{(\R^n)^2}
e^{i(\xi_1\cdot y_1+\xi_2 \cdot y_2)}
\psi_{k_1}(y_1-y_2)\psi_{\widetilde{k}_1}(y_1)\psi_{k_2}(y_2)
\widehat{m}(y_1,y_2)\, dy_1 dy_2
\\
&=\int_{(\R^n)^2}
e^{i(\xi_1\cdot y_1+\xi_2 \cdot y_2)}
\Big(\frac{1}{(2\pi)^n}\int_{\R^n}
e^{i(y_1-y_2)\cdot\eta}\widehat{\psi_{k_1}}(\eta)\, d\eta \Big)
\\
&\qquad \qquad \times
\psi_{\widetilde{k}_1}(y_1)\psi_{k_2}(y_2)
\widehat{m}(y_1,y_2)\, dy_1 dy_2
\\
&=(2\pi)^n\int_{\R^n}2^{k_1 n}
\widehat{\psi}(2^{k_1}\eta)
[\psi_{\widetilde{k}_1}(D_{\xi_1})\psi_{k_2}(D_{\xi_2})
m](\xi_1+\eta,\xi_2-\eta)\, d\eta ,
\end{align*}
where $\widehat{\psi}$ is replaced by $\widehat{\psi_0}$
if $k_1=0$.
Hence,
$2^{\boldsymbol{k} \cdot \boldsymbol {s}}
\big\|\|\Delta_{\boldsymbol{k}}m^{*2}(\xi_1,\xi_2)
\|_{L^2_{\xi_2}}\big\|_{L^{\infty}_{\xi_1}}$ is estimated by
\begin{align*}
&\Bigg(\sum_{\substack{\widetilde{k}_1 \,:\,
\widetilde{k}_1 \le k_2+2 \\ k_1< k_2+5}}
+\sum_{\substack{\widetilde{k}_1 \,:\,
\widetilde{k}_1 > k_2+2 \\
|\widetilde{k}_1-k_1| < 3}}\Bigg)
2^{k_1s_1+k_2s_2}
\left\|\left\|[\psi_{\widetilde{k}_1}(D_{\xi_1})\psi_{k_2}(D_{\xi_2})
m](\xi_1,\xi_2)\right\|_{L^2_{\xi_2}}\right\|_{L^{\infty}_{\xi_1}}
\\
&\lesssim
\sum_{\widetilde{k}_1 \,:\,
\widetilde{k}_1 \le k_2+2}
2^{k_2(s_1+s_2)}\left\|\left\|
\Delta_{(\widetilde{k}_1,k_2)}m
\right\|_{L^2_{\xi_2}}\right\|_{L^{\infty}_{\xi_1}}
\\
&\qquad \qquad
+\sum_{\widetilde{k}_1 \,:\, |\widetilde{k}_1-k_1| < 3}
2^{\widetilde{k}_1s_1+k_2s_2}\left\|\left\|
\Delta_{(\widetilde{k}_1,k_2)}m
\right\|_{L^2_{\xi_2}}\right\|_{L^{\infty}_{\xi_1}}
\\
&=\sum_{\widetilde{k}_1 \,:\,
\widetilde{k}_1 \le k_2+2}
2^{-\widetilde{k}_1s_1}
\Big(2^{\widetilde{k}_1s_1+k_2(s_1+s_2)}\left\|\left\|
\Delta_{(\widetilde{k}_1,k_2)}m
\right\|_{L^2_{\xi_2}}\right\|_{L^{\infty}_{\xi_1}}\Big)
\\
&\qquad \qquad
+\sum_{\widetilde{k}_1 \,:\, |\widetilde{k}_1-k_1| < 3}
2^{\widetilde{k}_1s_1+k_2s_2}\left\|\left\|
\Delta_{(\widetilde{k}_1,k_2)}m
\right\|_{L^2_{\xi_2}}\right\|_{L^{\infty}_{\xi_1}}
\\
&\lesssim
\sup_{\widetilde{k}_1, k_2 \in \N_0}
2^{\widetilde{k}_1s_1+k_2(s_1+s_2)}\left\|\left\|
\Delta_{(\widetilde{k}_1,k_2)}m
\right\|_{L^2_{\xi_2}}\right\|_{L^{\infty}_{\xi_1}}
=\|m\|_{B^{(s_1,s_1+s_2)}_2},
\end{align*}
where we used the assumption $s_1>0$
in the second inequality.
Therefore, we have the assertion (1).
As a result of it,
the assertion (2) can be proved in the same way
as in the last part of the proof of Lemma \ref{remove-cut}.
\end{proof}

We end this section by giving the following remark
mentioned before Theorem \ref{mainthm}.

%%%==================================================================
\begin{rem}\label{bounded-continuous}
Let $s_1>0$ and $s_2>n/2$.
Assume that $m \in \calS'(\R^n \times \R^n)$
satisfies $\|m\|_{W^{(s_1,s_2)}_2}<\infty$.
Using $\Delta_{\boldsymbol{k}}$,
$\boldsymbol{k} \in \N_0^2$, given in the beginning of this section,
we decompose 
$m=\sum\Delta_{\boldsymbol{k}}m$. Since
$\mathrm{supp}\,
\big(\Delta_{\boldsymbol{k}}m(\xi_1,\cdot)\big)^{\wedge}
\subset \{|y_2| \le 2^{k_2+1}\}$
for each $\xi_1 \in \R^n$,
we have
\[
\|\Delta_{\boldsymbol{k}}m(\xi_1,\xi_2)\|_{L^{\infty}_{\xi_2}}
\lesssim 2^{k_2 n/2}
\|\Delta_{\boldsymbol{k}}m(\xi_1,\xi_2)\|_{L^{2}_{\xi_2}}
\]
(see \cite[Remark 1.3.2/1]{Triebel}).
Then,
\begin{align*}
\|m\|_{L^{\infty}}
&\le \sum_{\boldsymbol{k} \in \N_0^2}
\|\Delta_{\boldsymbol{k}}m\|_{L^{\infty}}
\lesssim
\sum_{\boldsymbol{k} \in \N_0^2}
2^{k_2 n/2}\Big\|\|\Delta_{\boldsymbol{k}}m
(\xi_1,\xi_2)\|_{L^2_{\xi_2}}\Big\|_{L^{\infty}_{\xi_1}}
\\
&=\sum_{\boldsymbol{k} \in \N_0^2}
2^{-k_1s_1-k_2(s_2-n/2)}
\Big(2^{k_1s_1+k_2s_2}
\Big\|\Delta_{\boldsymbol{k}}m
(\xi_1,\xi_2)\|_{L^2_{\xi_2}}\Big\|_{L^{\infty}_{\xi_1}}\Big)
\lesssim \|m\|_{B^{(s_1,s_2)}_2}.
\end{align*}
Hence, it follows from Proposition \ref{equiv-norm} that
$\|m\|_{L^{\infty}} \lesssim \|m\|_{W^{(s_1,s_2)}_2}$.
Moreover, the above inequality says that
$\sum \|\Delta_{\boldsymbol{k}}m\|_{L^{\infty}}<\infty$.
Combining this with the fact that
each $\Delta_{\boldsymbol{k}}m(\xi_1,\xi_2)$ is continuous,
we see that
$m(\xi_1,\xi_2)$ coincides with a continuous function
on $\R^n \times \R^n$ almost everywhere.
\end{rem}

%%%==================================================================
\section{Proof of Theorem \ref{mainthm}}
\label{section4}

In this section, we shall prove Theorem \ref{mainthm}.
We first consider the $L^2 \times L^{\infty} \to L^2$ boundedness,
and next prove the $L^2 \times L^2 \to L^1$ one by duality.

%%%==================================================================
\subsection{The boundedness from $L^2 \times L^{\infty}$ to $L^2$}
\label{section4.1}
The goal of this subsection is to show that
if $s_1>0$ and $s_2>n/2$,
then
\begin{equation}\label{L^2L^{infty}-goal}
\|T_m\|_{L^2 \times L^\infty \to L^2}
\lesssim
\sup_{j \in \Z}\|m_j\|_{W^{(s_1,s_2)}_2},
\end{equation}
where $m_j$ is defined by \eqref{def-mj}.
By duality,
this follows from the estimate
\begin{equation}\label{L^2L^{infty}-dual}
\left|\int_{\R^n}T_m(f_1,f_2)(x)g(x)\, dx\right|
\lesssim\Big( \sup_{j \in \Z}\|m_j\|_{W^{(s_1,s_2)}_2}\Big)
\|f_1\|_{L^2}\|f_2\|_{L^{\infty}}\|g\|_{L^2}.
\end{equation}

We first observe that it is sufficient to consider the case
where $\mathrm{supp}\, m$ is included in a cone.
If $(\xi_1,\xi_2)$ belongs to the unit sphere
$\Sigma$ of $\R^n \times \R^n$,
then at least two of the three vectors $\xi_1$,
$\xi_2$ and $\xi_1+\xi_2$
are not equal to $0$ as elements of $\R^n$.
By the compactness of $\Sigma$,
this implies that there exists a constant $c>0$ such that $\Sigma$ is
covered by the three open sets
\begin{align*}
&V_0=\{(\xi_1,\xi_2) \in \Sigma \,:\, |\xi_1|>c, \ |\xi_2|>c\},
\\
&V_1=\{(\xi_1,\xi_2) \in \Sigma \,:\, |\xi_1|>c, \ |\xi_1+\xi_2|>c\},
\\
&V_2=\{(\xi_1,\xi_2) \in \Sigma \,:\, |\xi_2|>c, \ |\xi_1+\xi_2|>c\}.
\end{align*}
We write
\[
\Gamma(V_i)=\{(\xi_1,\xi_2) \in \R^n \times \R^n \setminus \{(0,0)\}
\,:\, (\xi_1,\xi_2)/|(\xi_1,\xi_2)| \in V_i\},
\quad i=0,1,2,
\]
and take functions $\Phi_i$ on $\R^n \times \R^n \setminus \{(0,0)\}$
such that each $\Phi_i$ is homogeneous of degree $0$,
smooth away from the origin,
$\mathrm{supp}\, \Phi_i \subset \Gamma(V_i)$,
and $\sum_{i=0}^2 \Phi_i(\xi_1,\xi_2)=1$, $(\xi_1,\xi_2) \neq (0,0)$.
Then, the multiplier $m$ can be written as
\[
m(\xi_1,\xi_2)
=\sum_{i=0}^2
m(\xi_1,\xi_2)\Phi_{i}(\xi_1,\xi_2),
\]
and it is sufficient to prove the boundedness
of each $T_{m\Phi_i}$.
In fact, if we can prove
\eqref{L^2L^{infty}-dual} with $s_i$ and $m$
replaced by $s_i-\epsilon$ and $m\Phi_i$,
where $\epsilon>0$ is a sufficiently small number
such that $s_1-\epsilon>0$ and $s_2-\epsilon>n/2$,
then it follows from \eqref{remove-cut-2} that
\begin{align*}
\|T_{m}\|_{L^2 \times L^{\infty} \to L^2}
&\le \sum_{i=0}^{2}\|T_{m\Phi_i}\|_{L^2 \times L^{\infty} \to L^2}
\\
&\lesssim
\sum_{i=0}^{2}
\sup_{j \in \Z}\|(m\Phi_i)_j\|_{W^{(s_1-\epsilon,s_2-\epsilon)}_2}
\lesssim
\sup_{j \in \Z}\|m_j\|_{W^{(s_1,s_2)}_2},
\end{align*}
which is \eqref{L^2L^{infty}-goal}.
Hence, writing simply $m$ instead of $m\Phi_i$,
we may assume that $\mathrm{supp}\, m$
is included in one of $\Gamma(V_i)$.

In any case,
we decompose $m$ as
\[
m(\xi_1,\xi_2)=\sum_{j \in \Z}
m(\xi_1,\xi_2)\Psi(2^{-j}\xi_1, 2^{-j}\xi_2)
=\sum_{j \in \Z}m_{(j)}(\xi_1,\xi_2),
\]
where $\Psi \in \calS(\R^{2n})$
is as in \eqref{LP-homogeneous} with $d=2n$
and $m_{(j)}(\xi_1,\xi_2)=m_j(2^{-j}\xi_1,2^{-j}\xi_2)$.
Thus,
\[
\int_{\R^n}T_m(f_1,f_2)(x)g(x)\, dx
=\sum_{j \in \Z}
\int_{\R^n}T_{m_{(j)}}(f_1,f_2)(x)g(x)\, dx.
\]
Hereafter, let us consider the three cases separately.
The cases $\mathrm{supp}\,m \subset \Gamma (V_{i})$, $i=0,1$,
can be handled in the same way as in \cite{Miyachi-Tomita-TMJ},
and we need a new idea only for the case
$\mathrm{supp}\,m \subset \Gamma (V_{2})$.
We use the following notations:
$\calA_0$ denotes the set of even functions $\varphi \in \calS(\R^n)$
for which $\mathrm{supp}\, \varphi$ is compact
and $\varphi=1$ on some neighborhood of the origin;
$\calA_1$ denotes the set of even functions $\psi \in \calS(\R^n)$
for which $\mathrm{supp}\, \psi$ is a compact subset of $\R^n \setminus \{0\}$.

%%%==================================================================
\medskip
{\it The case $\mathrm{supp}\,m \subset \Gamma (V_{0})$.}
In this case, if $(\xi_1,\xi_2) \in \mathrm{supp}\, m_{(j)}$,
then $|\xi_1| \approx |\xi_2| \approx |(\xi_1,\xi_2)| \approx 2^j$.
Hence, using appropriate functions
$\varphi \in \calA_0$ and $\psi \in \calA_1$,
we can write
\[
m_{(j)}(\xi_1,\xi_2)
=m_{(j)}(\xi_1,\xi_2)
\varphi(2^{-j}(\xi_1+\xi_2))\psi(2^{-j}\xi_1)\psi(2^{-j}\xi_2) .
\]
This gives
\[
\int_{\R^n}T_{m_{(j)}}(f_1,f_2)(x)g(x)\, dx
=\int_{\R^n}T_{{m_{(j)}}}
(\psi(2^{-j}D)f_1, \psi(2^{-j}D)f_2)(x)
\varphi(2^{-j}D)g(x)\, dx.
\]
Since $m_{(j)}(\xi_1,\xi_2)=m_j(2^{-j}\xi_1,2^{-j}\xi_2)$,
it follows from Lemma \ref{TMJ-pointwise} that
\begin{align*}
&|T_{m_{(j)}}
(\psi(2^{-j}D)f_1, \psi(2^{-j}D)f_2)(x)|
\\
&\lesssim
\left\|\int_{\R^n}e^{ix\cdot\xi_1}
\langle D_{\xi_2}\rangle^{s_2}
m_j(2^{-j}\xi_1,\xi_2)\calF[\psi(2^{-j}D)f_1](\xi_1)\, d\xi_1
\right\|_{L^2_{\xi_2}}
\\
&\qquad \qquad \times
(\zeta_j*|\psi(2^{-j}D)f_2|^2)(x)^{1/2} ,
\end{align*}
where $\zeta_j(x)=2^{jn}(1+2^j|x|)^{-2s_2}$.
Then, by Schwarz's inequality,
\begin{equation}\label{V0-case-1}
\begin{split}
&\left|\int_{\R^n}T_{m}(f_1,f_2)(x)g(x)\, dx\right|
\\
&\lesssim
\bigg(\sum_{j \in \Z}
\bigg\|\Big\|\int_{\R^n}e^{ix\cdot\xi_1}
\langle D_{\xi_2}\rangle^{s_2}
m_j(2^{-j}\xi_1,\xi_2)\calF[\psi(2^{-j}D)f_1](\xi_1)\, d\xi_1
\Big\|_{L^2_{\xi_2}}\bigg\|_{L^2_x}^2\bigg)^{1/2}
\\
&\qquad \qquad \times
\Big(\sum_{j \in \Z}
\Big\|\varphi(2^{-j}D)g(x)
(\zeta_j*|\psi(2^{-j}D)f_2|^2)(x)^{1/2}\Big\|_{L^2_x}^2\Big)^{1/2}.
\end{split}
\end{equation}

Changing the order of integrals,
and using Plancherel's theorem,
we have
\begin{align*}
&\bigg\|\Big\|\int_{\R^n}e^{ix\cdot\xi_1}
\langle D_{\xi_2}\rangle^{s_2}
m_j(2^{-j}\xi_1,\xi_2)\calF[\psi(2^{-j}D)f_1](\xi_1)\, d\xi_1
\Big\|_{L^2_{\xi_2}}\bigg\|_{L^2_x}
\\
&=(2\pi)^{n/2}\bigg\|\Big\|
\langle D_{\xi_2}\rangle^{s_2}
m_j(2^{-j}\xi_1,\xi_2)\calF[\psi(2^{-j}D)f_1](\xi_1)
\Big\|_{L^2_{\xi_1}}\bigg\|_{L^2_{\xi_2}}
\\
&\lesssim
\bigg\|\Big\|
\langle D_{\xi_2}\rangle^{s_2}
m_j(\xi_1,\xi_2)
\Big\|_{L^2_{\xi_2}}\bigg\|_{L^{\infty}_{\xi_1}}
\|\calF[\psi(2^{-j}D)f_1](\xi_1)\|_{L^2_{\xi_1}}
\\
&\approx
\|m_j\|_{W^{(0,s_2)}_2}\|\psi(2^{-j}D)f_1\|_{L^2}
\lesssim
\|m_j\|_{W^{(s_1,s_2)}_2}\|\psi(2^{-j}D)f_1\|_{L^2},
\end{align*}
where we used \eqref{rem-equiv-norm-1}
in the last inequality.
Thus, since $\psi$ is a Schwartz function
whose support is a compact subset of $\R^n \setminus \{0\}$,
the quantity concerning $f_1$
in the right hand side of \eqref{V0-case-1}
is estimated by
\[
\left(\sup_{j \in \Z}
\|m_j\|_{W^{(s_1,s_2)}_2}\right)
\left(\sum_{j \in \Z}\|\psi(2^{-j}D)f_1\|_{L^2}^2\right)^{1/2}
\lesssim
\left(\sup_{j \in \Z}
\|m_j\|_{W^{(s_1,s_2)}_2}\right)
\|f_1\|_{L^2}.
\]
On the other hand,
by using the inequality $|\varphi(2^{-j}D)g| \lesssim \zeta_j*|g|$,
it follows from Lemma \ref{CJM-Carleson} that
the quantity concerning $f_2$ and $g$ in the right hand side of
\eqref{V0-case-1} is estimated by
$\|f_2\|_{BMO}\|g\|_{L^2} \lesssim \|f_2\|_{L^{\infty}}\|g\|_{L^2}$.
Therefore, \eqref{L^2L^{infty}-dual} is obtained.

%%%==================================================================
\medskip
{\it The case $\mathrm{supp}\,m \subset \Gamma (V_{1})$.}
In this case, if $(\xi_1,\xi_2) \in \mathrm{supp}\, m_{(j)}$,
then $|\xi_1| \approx |\xi_1+\xi_2| \approx |(\xi_1,\xi_2)| \approx 2^j$.
Hence, using an appropriate function $\psi \in \calA_1$,
we can write
\[
m_{(j)}(\xi_1,\xi_2)
=m_{(j)}(\xi_1,\xi_2)
\psi(2^{-j}(\xi_1+\xi_2))\psi(2^{-j}\xi_1) .
\]
This gives
\[
\int_{\R^n}T_{m_{(j)}}(f_1,f_2)(x)g(x)\, dx
=\int_{\R^n}T_{{m_{(j)}}}
(\psi(2^{-j}D)f_1, f_2)(x)
\psi(2^{-j}D)g(x)\, dx.
\]
It follows from Lemma \ref{TMJ-pointwise}
and the inequality
$(\zeta_j*|f_2|^2)^{1/2} \lesssim \|f_2\|_{L^{\infty}}$ that
\begin{align*}
&|T_{m_{(j)}}
(\psi(2^{-j}D)f_1, f_2)(x)|
\\
&\lesssim
\left\|\int_{\R^n}e^{ix\cdot\xi_1}
\langle D_{\xi_2}\rangle^{s_2}
m_j(2^{-j}\xi_1,\xi_2)\calF[\psi(2^{-j}D)f_1](\xi_1)\, d\xi_1
\right\|_{L^2_{\xi_2}}
\|f_2\|_{L^{\infty}} .
\end{align*}
Then,
instead of \eqref{V0-case-1},
we have
\begin{align*}
&\left|\int_{\R^n}T_{m}(f_1,f_2)(x)g(x)\, dx\right|
\\
&\lesssim
\bigg(\sum_{j \in \Z}
\bigg\|\Big\|\int_{\R^n}e^{ix\cdot\xi_1}
\langle D_{\xi_2}\rangle^{s_2}
m_j(2^{-j}\xi_1,\xi_2)\calF[\psi(2^{-j}D)f_1](\xi_1)\, d\xi_1
\Big\|_{L^2_{\xi_2}}\bigg\|_{L^2_x}^2\bigg)^{1/2}
\\
&\qquad \qquad \times
\|f_2\|_{L^{\infty}}
\Big(\sum_{j \in \Z}
\Big\|\psi(2^{-j}D)g\Big\|_{L^2_x}^2\Big)^{1/2}.
\end{align*}
Since the quantity concerning $f_1$ in the right hand side of
the last inequality is the same as before,
and the quantity concerning $g$ is estimated by $\|g\|_{L^2}$,
we have \eqref{L^2L^{infty}-dual}.

%%%==================================================================
\medskip
{\it The case $\mathrm{supp}\,m \subset \Gamma (V_{2})$.}
In this case, if $(\xi_1,\xi_2) \in \mathrm{supp}\, m_{(j)}$,
then $|\xi_2| \approx |\xi_1+\xi_2| \approx |(\xi_1,\xi_2)| \approx 2^j$.
Hence, using appropriate functions $\varphi \in \calA_0$
and $\psi \in \calA_1$,
we can write
\[
m_{(j)}(\xi_1,\xi_2)
=m_{(j)}(\xi_1,\xi_2)
\psi(2^{-j}(\xi_1+\xi_2))\varphi(2^{-j}\xi_1)\psi(2^{-j}\xi_2)
\]
This gives
\[
\int_{\R^n}T_{m_{(j)}}(f_1,f_2)(x)g(x)\, dx
=\int_{\R^n}T_{{m_{(j)}}}
(\varphi(2^{-j}D)f_1, \psi(2^{-j}D)f_2)(x)
\psi(2^{-j}D)g(x)\, dx.
\]
By Lemma \ref{TMJ-pointwise},
\begin{align*}
&|T_{m_{(j)}}
(\varphi(2^{-j}D)f_1, \psi(2^{-j}D)f_2)(x)|
\\
&\lesssim
\left\|\int_{\R^n}e^{ix\cdot\xi_1}
\langle D_{\xi_2}\rangle^{s_2}
m_j(2^{-j}\xi_1,\xi_2)\calF[\varphi(2^{-j}D)f_1](\xi_1)\, d\xi_1
\right\|_{L^2_{\xi_2}}
\\
&\qquad \qquad \times
(\zeta_j*|\psi(2^{-j}D)f_2|^2)(x)^{1/2} .
\end{align*}
We divide the $L^2_{\xi_2}$-norm concerning $f_1$
in the right hand side of the last inequality into
\begin{equation}\label{V2-case-1}
\begin{split}
&\left\|\int_{\R^n}e^{ix\cdot\xi_1}
\langle D_{\xi_2}\rangle^{s_2}
m_j(0,\xi_2)\calF[\varphi(2^{-j}D)f_1](\xi_1)\, d\xi_1
\right\|_{L^2_{\xi_2}}
\\
&\quad
+\left\|\int_{\R^n}e^{ix\cdot\xi_1}
\langle D_{\xi_2}\rangle^{s_2}
(m_j(2^{-j}\xi_1,\xi_2)-m_j(0,\xi_2))\calF[\varphi(2^{-j}D)f_1](\xi_1)\, d\xi_1
\right\|_{L^2_{\xi_2}}
\\
&=(2\pi)^n
\|\langle D_{\xi_2}\rangle^{s_2}
m_j(0,\xi_2)\|_{L^2_{\xi_2}}
|\varphi(2^{-j}D)f_1(x)|
\\
&\quad
+\left\|\int_{\R^n}e^{ix\cdot\xi_1}
\langle D_{\xi_2}\rangle^{s_2}
\tau_j(\xi_1,\xi_2)\calF[\varphi(2^{-j}D)f_1](\xi_1)\, d\xi_1
\right\|_{L^2_{\xi_2}},
\end{split}
\end{equation}
where
\[
\tau_j(\xi_1,\xi_2)
=m_j(2^{-j}\xi_1,\xi_2)-m_j(0,\xi_2).
\]
In the rest of the proof,
we assume that
$\sup_{j \in \Z}\|m_j\|_{W^{(s_1,s_2)}_2}=1$
for the sake of simplicity.

For the former term
in the right hand side of \eqref{V2-case-1},
the quantity we have to consider is
\begin{align*}
&\sum_{j \in \Z}\|\langle D_{\xi_2}\rangle^{s_2}
m_j(0,\xi_2)\|_{L^2_{\xi_2}}
\\
&\qquad \times
\int_{\R^n}
|\varphi(2^{-j}D)f_1(x)|
(\zeta_j*|\psi(2^{-j}D)f_2|^2)(x)^{1/2}
|\psi(2^{-j}D)g(x)|\, dx,
\end{align*}
but we can easily handle this.
In fact, since
\[
\|\langle D_{\xi_2}\rangle^{s_2}
m_j(0,\xi_2)\|_{L^2_{\xi_2}}
\le \left\|\|\langle D_{\xi_2}\rangle^{s_2}
m_j(\xi_1,\xi_2)\|_{L^2_{\xi_2}}\right\|_{L^{\infty}_{\xi_1}}
\lesssim
\|m_j\|_{W^{(s_1,s_2)}_2}
\le 1,
\]
where we used  \eqref{rem-equiv-norm-1}
for the second inequality,
it follows from Schwarz's inequality that
the above quantity is estimated by
\[
\left(\sum_{j \in \Z}\|\varphi(2^{-j}D)f_1 \,
(\zeta_j*|\psi(2^{-j}D)f_2|^2)^{1/2}\|_{L^2}^2\right)^{1/2}
\left(\sum_{j \in \Z}
\|\psi(2^{-j}D)g\|_{L^2}^2\right)^{1/2},
\]
and this is estimated by
the right hand side of \eqref{L^2L^{infty}-dual}
from Lemma \ref{CJM-Carleson}.

For the latter term
in the right hand side of \eqref{V2-case-1},
the quantity we have to consider is
\begin{align*}
&\sum_{j \in \Z}\int_{\R^n}\left\|\int_{\R^n}e^{ix\cdot\xi_1}
\langle D_{\xi_2}\rangle^{s_2}
\tau_j(\xi_1,\xi_2)\calF[\varphi(2^{-j}D)f_1](\xi_1)\, d\xi_1
\right\|_{L^2_{\xi_2}}
\\
&\qquad \qquad \times
(\zeta_j*|\psi(2^{-j}D)f_2|^2)(x)^{1/2}
|\psi(2^{-j}D)g(x)|\, dx.
\end{align*}
By Schwarz's inequality, this is estimated by
\begin{align*}
&\bigg(\sum_{j \in \Z}\bigg\|\Big\|\int_{\R^n}e^{ix\cdot\xi_1}
\langle D_{\xi_2}\rangle^{s_2}
\tau_j(\xi_1,\xi_2)\calF[\varphi(2^{-j}D)f_1](\xi_1)\, d\xi_1
\Big\|_{L^2_{\xi_2}}\bigg\|_{L^2_x}^2\bigg)^{1/2}
\\
&\qquad \qquad \times
\left(\sup_{j \in \Z}\|(\zeta_j*|\psi(2^{-j}D)f_2|^2)^{1/2}\|_{L^{\infty}}
\right)
\left(\sum_{j \in \Z}\|\psi(2^{-j}D)g\|_{L^2}^2\right)^{1/2}
\\
&\lesssim
\bigg(\sum_{j \in \Z}\bigg\|\Big\|\int_{\R^n}e^{ix\cdot\xi_1}
\langle D_{\xi_2}\rangle^{s_2}
\tau_j(\xi_1,\xi_2)\calF[\varphi(2^{-j}D)f_1](\xi_1)\, d\xi_1
\Big\|_{L^2_{\xi_2}}\bigg\|_{L^2_x}^2\bigg)^{1/2}
\\
&\qquad \qquad \times
\|f_2\|_{L^{\infty}}\|g\|_{L^2}.
\end{align*}
Then, the remaining task is to show that
the term concerning $f_1$ in the last line is estimated by $\|f_1\|_{L^2}$,
and this is done by proving
\begin{equation}\label{V2-case-2}
\sup_{\xi_1 \in \R^n}\bigg(\sum_{j \in \Z}
\Big\|\langle D_{\xi_2}\rangle^{s_2}
\tau_j(\xi_1,\xi_2)\varphi(2^{-j}\xi_1)
\Big\|_{L^2_{\xi_2}}^2\bigg)^{1/2}
\lesssim 1.
\end{equation}
Indeed, once \eqref{V2-case-2} is obtained,
by changing the order of integrals,
and applying Plancherel's theorem to the $L^2_x$-norm,
we have
\begin{align*}
&\bigg(\sum_{j \in \Z}\bigg\|\Big\|\int_{\R^n}e^{ix\cdot\xi_1}
\langle D_{\xi_2}\rangle^{s_2}
\tau_j(\xi_1,\xi_2)\calF[\varphi(2^{-j}D)f_1](\xi_1)\, d\xi_1
\Big\|_{L^2_{\xi_2}}\bigg\|_{L^2_x}^2\bigg)^{1/2}
\\
&=(2\pi)^{n/2}
\bigg(\sum_{j \in \Z}\bigg\|\Big\|
\langle D_{\xi_2}\rangle^{s_2}
\tau_j(\xi_1,\xi_2)\calF[\varphi(2^{-j}D)f_1](\xi_1)
\Big\|_{L^2_{\xi_1}}\bigg\|_{L^2_{\xi_2}}^2\bigg)^{1/2}
\\
&=(2\pi)^{n/2}
\bigg\|\bigg(\sum_{j \in \Z}
\Big\|\langle D_{\xi_2}\rangle^{s_2}
\tau_j(\xi_1,\xi_2)\varphi(2^{-j}\xi_1)
\Big\|_{L^2_{\xi_2}}^2\bigg)^{1/2} \,
\widehat{f_1}(\xi_1)\bigg\|_{L^2_{\xi_1}}
\lesssim
\|f_1\|_{L^2}.
\end{align*}

We shall prove \eqref{V2-case-2},
and this follows from the estimate
\begin{equation}\label{V2-case-3}
\left\|\langle D_{\xi_2}\rangle^{s_2}
\tau_j(\xi_1,\xi_2)\varphi(2^{-j}\xi_1)
\right\|_{L^2_{\xi_2}}
\lesssim
\min\left\{|2^{-j}\xi_1|^{-1}, \,
|2^{-j}\xi_1|^{\epsilon} \right\},
\end{equation}
where $0<\epsilon<s_1$.
It is easy to estimate the left hand side
of \eqref{V2-case-3} from above by $|2^{-j}\xi|^{-1}$.
In fact,
since $\varphi$ is rapidly decreasing,
it is estimated by
\begin{align*}
\left\|\langle D_{\xi_2}\rangle^{s_2}
\tau_j(\xi_1,\xi_2)\right\|_{L^2_{\xi_2}}
|2^{-j}\xi_1|^{-1}
&\le
2\left\|\left\|\langle D_{\xi_2}\rangle^{s_2}
m_j(\xi_1,\xi_2)\right\|_{L^2_{\xi_2}}\right\|_{L^{\infty}_{\xi_1}}
|2^{-j}\xi_1|^{-1}
\\
&\lesssim
\left(\sup_{j \in \Z}
\|m_j\|_{W^{(s_1,s_2)}_2}\right)
|2^{-j}\xi_1|^{-1}
= |2^{-j}\xi_1|^{-1} .
\end{align*}
In order to estimate it by $|2^{-j}\xi_1|^{\epsilon}$,
we decompose $\tau_j$ as
\[
\tau_j(\xi_1,\xi_2)
=\sum_{k \in \N_0}
\sigma_{j,k}(\xi_1,\xi_2)
\]
with
\[
\sigma_{j,k}(\xi_1,\xi_2)
=[\psi_{k}(D_{\xi_1})m_j](2^{-j}\xi_1,\xi_2)
-[\psi_{k}(D_{\xi_1})m_j](0,\xi_2),
\]
where $\{\psi_k\}_{k \in \N_0}$
is the same as in the beginning of Section \ref{section3},
and prove
\begin{equation}\label{V2-case-4}
\begin{split}
&\|\langle D_{\xi_2}\rangle^{s_2}
\sigma_{j,k}(\xi_1,\xi_2)\|_{L^2_{\xi_2}}
\\
&\lesssim
\min\{1, \, |2^{-j}\xi_1|2^k\}
\left\|\left\|\psi_{k}(D_{\xi_1})
\langle D_{\xi_2}\rangle^{s_2}
m_j(\xi_1,\xi_2)\right\|_{L^2_{\xi_2}}
\right\|_{L^{\infty}_{\xi_1}} .
\end{split}
\end{equation}
Once this is obtained,
the left hand side of \eqref{V2-case-3}
is estimated by
\begin{align*}
&\left\|\langle D_{\xi_2}\rangle^{s_2}
\tau_j(\xi_1,\xi_2)
\right\|_{L^2_{\xi_2}}
\le
\sum_{k \in \N_0}
\left\|\langle D_{\xi_2}\rangle^{s_2}
\sigma_{j,k}(\xi_1,\xi_2)
\right\|_{L^2_{\xi_2}}^{1-\epsilon}
\left\|\langle D_{\xi_2}\rangle^{s_2}
\sigma_{j,k}(\xi_1,\xi_2)
\right\|_{L^2_{\xi_2}}^{\epsilon}
\\
&\lesssim
\sum_{k \in \N_0}
\left\|\left\|\psi_{k}(D_{\xi_1})
\langle D_{\xi_2}\rangle^{s_2}
m_j(\xi_1,\xi_2)\right\|_{L^2_{\xi_2}}
\right\|_{L^{\infty}_{\xi_1}}^{1-\epsilon}
\\
&\qquad \qquad \times
\left( |2^{-j}\xi_1|2^k
\left\|\left\|\psi_{k}(D_{\xi_1})
\langle D_{\xi_2}\rangle^{s_2}
m_j(\xi_1,\xi_2)\right\|_{L^2_{\xi_2}}
\right\|_{L^{\infty}_{\xi_1}}\right)^{\epsilon}
\\
&=|2^{-j}\xi_1|^{\epsilon}
\sum_{k \in \N_0}
2^{k(\epsilon-s_1)}
\left(2^{ks_1}\left\|\left\|\psi_{k}(D_{\xi_1})
\langle D_{\xi_2}\rangle^{s_2}
m_j(\xi_1,\xi_2)\right\|_{L^2_{\xi_2}}
\right\|_{L^{\infty}_{\xi_1}}\right)
\\
&\lesssim
|2^{-j}\xi_1|^{\epsilon}
\left(\sup_{k \in \N_0}\|m_j\|_{W^{(s_1,s_2)}_2}\right)
=
|2^{-j}\xi_1|^{\epsilon},
\end{align*}
where we used the fact $\epsilon<s_1$
and \eqref{rem-equiv-norm-2} in the last inequality.

We finally prove \eqref{V2-case-4}.
The estimate with the factor $1$ is just obtained by the triangle inequality,
that is, the left hand side of \eqref{V2-case-4}
is estimated by
\begin{align*}
&\left\|[\psi_k(D_{\xi_1})
\langle D_{\xi_2}\rangle^{s_2}
m_j](2^{-j}\xi_1,\xi_2)
\right\|_{L^2_{\xi_2}}
+\left\|[\psi_k(D_{\xi_1})
\langle D_{\xi_2}\rangle^{s_2}
m_j](0,\xi_2)
\right\|_{L^2_{\xi_2}}
\\
&\le 2\left\|\left\|\psi_k(D_{\xi_1})
\langle D\rangle^{s_2}
m_j(\xi_1,\xi_2)
\right\|_{L^2_{\xi_2}}\right\|_{L^{\infty}_{\xi_1}}.
\end{align*}
To show the estimate with the factor $|2^{-j}\xi_1| 2^k$,
we use Taylor's formula and write
\begin{align*}
&\sigma_{j,k}(\xi_1,\xi_2)
=\int_{0}^1
(2^{-j}\xi_1)\cdot
[\nabla_{\xi_1}\psi_k(D_{\xi_1})m_j](t2^{-j}\xi_1,\xi_2)\,
dt
\\
&=\int_{0}^1
(2^{-j}\xi_1)\cdot
[\nabla_{\xi_1}\widetilde{\psi}_k(D_{\xi_1})
\psi_k(D_{\xi_1})m_j](t2^{-j}\xi_1,\xi_2)\,
dt
\\
&=\int_{0}^1
(2^{-j}\xi_1)\cdot
\left(\int_{\R^n}
2^k2^{kn}
[\nabla \calF^{-1}\widetilde{\psi}](2^k\eta_1)
[\psi_k(D_{\xi_1})m_j](t2^{-j}\xi_1-\eta_1,\xi_2)\,
d\eta_1\right) dt,
\end{align*}
where  $\{\widetilde{\psi}_k\}_{k \in \N_0}$
is the same as in the proof of Proposition
\ref{prop-equiv-norm},
and $\widetilde{\psi}$ is replaced by $\widetilde{\psi}_0$
if $k=0$.
Then, by Minkowski's inequality for integrals,
the left hand side of \eqref{V2-case-4}
can be estimated by
\[
\|\nabla \calF^{-1} \widetilde{\psi}\|_{L^1}
|2^{-j}\xi_1|2^k
\left\|\left\|\psi_{k}(D_{\xi_1})
\langle D_{\xi_2}\rangle^{s_2}
m_j(\xi_1,\xi_2)\right\|_{L^2_{\xi_2}}
\right\|_{L^{\infty}_{\xi_1}},
\]
which is the desired result.
The proof is complete.

%%%==================================================================
\subsection{The boundedness from $L^2 \times L^2$ to $L^1$}
\label{section4.2}
The goal of this subsection is to show that
if $s_1>0$ and $s_2>n/2$,
then
\begin{equation}\label{L^2L^2-goal}
\|T_m\|_{L^2 \times L^2 \to L^1}
\lesssim
\sup_{j \in \Z}\|m_j\|_{W^{(s_1,s_2)}_2},
\end{equation}
where $m_j$ is defined by \eqref{def-mj}.

Let $s_i', s_i''$, $i=1,2$, be such that
$0<s_1'<s_1''<s_1$, $n/2<s_2'<s_2''$,
and $s_1''+s_2''<s_2$.
By using the formula
\[
\int
T_m(f_1,f_2)(x)g(x)\, dx
=\int
T_{m^{*1}}(g,f_2)(x)f_1(x)\, dx
=\int
T_{m^{*2}}(f_1,g)(x)f_2(x)\, dx,
\]
where $m^{*i}$, $i=1,2$, are defined by \eqref{dual-form},
it follows from  duality and \eqref{L^2L^{infty}-goal} that
\[
\|T_m\|_{L^2 \times L^2 \to L^1}
=
\|T_{m^{*2}}\|_{L^2 \times L^\infty \to L^2}
\lesssim
\sup_{j \in \Z}\|(m^{*2})_j\|_{W^{(s_1',s_2')}_2}.
\]
By \eqref{change-2}
and \eqref{remove-cut-2}
with $\Phi(\xi_1,\xi_2)=1$
and $\widetilde{\Psi}(\xi_1,\xi_2)=\Psi(\xi_1,-\xi_1-\xi_2)$,
\begin{equation}\label{m*2}
\begin{split}
&\|(m^{*2})_j\|_{W^{(s_1',s_2')}_2}
=\|(m(2^j\xi_1, -2^{j}(\xi_1+\xi_2))
\Psi(\xi_1, \xi_2)\|_{W^{(s_1',s_2')}_2}
\\
&\lesssim
\|(m(2^j\xi_1, 2^j\xi_2)
\Psi(\xi_1, -\xi_1-\xi_2)\|_{W^{(s_1'',s_1''+s_2'')}_2}
\lesssim
\sup_{j \in \Z}\|m_j\|_{W^{(s_1,s_2)}_2},
\end{split}
\end{equation}
which gives \eqref{L^2L^2-goal}.

%%%==================================================================
\section{Proof of Theorem \ref{mainthm2}}
\label{section5}

In this section,
we shall consider the $L^2 \times BMO \to L^2$
and $L^2 \times L^2 \to H^1$ boundedness.

%%%==================================================================
\subsection{Pointwise multiplication with homogeneous functions}
\label{section5.1}

We prepare the estimate
for pointwise multiplication with homogeneous functions
in the $L^2$-based Sobolev space,
which will be used in the proof of Theorem \ref{mainthm2}.

%%%==================================================================
\begin{lem}\label{lem-decomposition}
Let $s>n/2$.
If $f \in W^s(\R^n)$ satisfies $f(0)=0$,
then $f$ can be written as $f=\sum_{k \in \N_0}g_k$,
where $g_k(0)=0$,
$\|g_k\|_{L^2} \lesssim 2^{-ks}\|f\|_{W^s}$,
$\mathrm{supp}\, \widehat{g_0} \subset \{|\xi| \le 2\}$,
and
$\mathrm{supp}\, \widehat{g_k} \subset \{2^{k-1} \le |\xi| \le 2^{k+1}\}$,
$k \ge 1$.
\end{lem}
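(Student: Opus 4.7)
\medskip

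The plan is to start from the Littlewood--Paley decomposition $f=\sum_{k\in\N_0}f_k$ with $f_k=\psi_k(D)f$, and then perturb each $f_k$ by a small telescoping correction so that each resulting piece vanishes at the origin while keeping the frequency support inside the correct dyadic shell. The Sobolev embedding $W^s\hookrightarrow C_0$ (valid because $s>n/2$) ensures $f(0)$ and the $f_k(0)$ are well defined, and the basic estimate $\|f_k\|_{L^2}\lesssim 2^{-ks}\|f\|_{W^s}$ is automatic. A standard $L^2/L^\infty$ estimate on the Fourier side combined with the support of $\psi_k$ also yields
\[
|f_k(0)|=\Big|\frac{1}{(2\pi)^n}\int\psi_k(\xi)\widehat f(\xi)\,d\xi\Big|
\lesssim 2^{kn/2}\|\psi_k\widehat f\|_{L^2}
\lesssim 2^{k(n/2-s)}\|f\|_{W^s},
\]
which is summable since $s>n/2$, and is the key quantitative input.

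Next I would fix a single Schwartz function $\eta$ with $\supp\widehat\eta\subset\{1\le|\xi|\le 2\}$ normalised so that $\eta(0)=1$, and rescale it by setting $\widehat{\eta_k}(\xi)=2^{-kn}\widehat\eta(2^{-k}\xi)$ for $k\ge 0$. Then $\supp\widehat{\eta_k}\subset\{2^k\le|\xi|\le 2^{k+1}\}$, $\eta_k(0)=1$, and $\|\eta_k\|_{L^2}\approx 2^{-kn/2}$. The crucial point is that $\widehat{\eta_k}$ straddles the boundary between shells $k$ and $k+1$, so $\eta_k$ can legitimately appear both in $g_k$ and in $g_{k+1}$. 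Setting $a_k=f_k(0)$ and $b_k=\sum_{j\le k}a_j$, I would define
\[
g_0=f_0-b_0\eta_0,\qquad g_k=f_k-b_k\eta_k+b_{k-1}\eta_{k-1}\quad(k\ge 1).
\]

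The verification is then mechanical. Since $\sum_{j\ge 0}a_j=f(0)=0$, the tail bound gives $|b_k|=|\sum_{j>k}a_j|\lesssim 2^{k(n/2-s)}\|f\|_{W^s}$, so $b_k\to 0$. Then $g_k(0)=a_k-b_k+b_{k-1}=0$, the partial sums telescope as $\sum_{k=0}^N g_k=\sum_{k=0}^N f_k-b_N\eta_N\to f$ in $L^2$, the supports are correct because $\widehat{\eta_k}$ lies in shell $k$ and $\widehat{\eta_{k-1}}$ lies in shell $k$ as well, and the $L^2$ estimate follows from
\[
\|g_k\|_{L^2}\le\|f_k\|_{L^2}+|b_k|\|\eta_k\|_{L^2}+|b_{k-1}|\|\eta_{k-1}\|_{L^2}
\lesssim\bigl(2^{-ks}+2^{k(n/2-s)-kn/2}+2^{-(k-1)s}\bigr)\|f\|_{W^s}
\lesssim 2^{-ks}\|f\|_{W^s}.
\]

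The one non-routine step is the choice of the correction: a naive attempt like $g_k=f_k-c_k\theta_k$ with $\widehat{\theta_k}$ supported strictly in shell $k$ cannot work, because the constraint $\sum_k g_k=f$ would force all $c_k\theta_k$ to vanish by frequency disjointness. I expect this telescoping trick with a bump whose Fourier support overlaps two adjacent shells to be the main obstacle; once it is set up, everything else is bookkeeping.
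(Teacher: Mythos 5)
Your proposal is correct and is essentially the same argument as the paper's: you take the Littlewood--Paley pieces $f_k=\psi_k(D)f$, introduce a bump with Fourier support in $\{2^k\le|\xi|\le2^{k+1}\}$ (your $\widehat{\eta_k}$ is the paper's $\theta_k$ up to the usual $(2\pi)^n$), form the telescoping corrections with partial sums $b_k$ of the values $f_k(0)$, and exploit $f(0)=0$ to bound $|b_k|=|\sum_{j>k}a_j|\lesssim 2^{k(n/2-s)}\|f\|_{W^s}$. The support, vanishing, $L^2$-size, and convergence checks all match the paper's verification, so this is the same proof, not a different route.
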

%%%==================================================================
\begin{proof}
Let $\psi_k$, $k \in \N_0$,
be as in the beginning of Section \ref{section3}.
We also use a function $\theta \in \calS(\R^n)$
satisfying $\int_{\R^n}\theta(\xi)\, d\xi=1$
and $\mathrm{supp}\, \theta \subset \{1 \le |\xi| \le 2\}$,
and set $\theta_k(\xi)=2^{-kn}\theta(2^{-k}\xi)$, $k \in \N_0$.
Then
$\mathrm{supp}\, \theta_k
\subset \{2^{k} \le |\xi| \le 2^{k+1}\}$.
Since $f$ belongs to $W^s(\R^n)$
and $s>n/2$,
$\widehat{f}$ is integrable.
Set $a_k=\int_{\R^n}\psi_k(\xi)\widehat{f}(\xi)\, d\xi$.
Since $\sum_{k \in \N_0}\psi_k(\xi)=1$ and
\[
\left|(a_0+a_1+\dots+a_k)\theta_k(\xi)\right|
\lesssim \|\widehat{f}\|_{L^1}2^{-kn}
\to 0
\quad \text{as} \quad k \to \infty
\]
for all $\xi \in \R^n$,
$\widehat{f}$ can be written as
\begin{align*}
\widehat{f}
&=\psi_0 \widehat{f} - a_0 \theta_0
\\
&\qquad
+\psi_1 \widehat{f} + a_0 \theta_0 -(a_0+a_1)\theta_1
\\
&\qquad
+\psi_2 \widehat{f} + (a_0+a_1)\theta_1 - (a_0+a_1+a_2)\theta_2
\\
&\qquad \
\vdots
\\
&\qquad
+\psi_k \widehat{f} + (a_0+a_1+\dots+a_{k-1})\theta_{k-1} - (a_0+a_1+\dots+a_k)\theta_k
\\
&\qquad \
\vdots
\end{align*}
If we set $g_0=\calF^{-1}[\psi_0\widehat{f}-a_0\theta_0]$ and
\[
g_k=\calF^{-1}[\psi_k \widehat{f} + (a_0+a_1+\dots+a_{k-1})\theta_{k-1} - (a_0+a_1+\dots+a_k)\theta_k],
\quad k \ge 1,
\]
we have $f=\sum_{k \in \N_0}g_k$.
Hence, in the rest of the proof,
we shall check that $g_k$, $k \in \N_0$, satisfy the desired conditions.

Using the fact
$\int_{\R^n}\theta_k(\xi)\, d\xi=1$, we see that
\begin{align*}
\int_{\R^n}\widehat{g_k}(\xi)\, d\xi
=a_k+(a_0+a_1+\dots+a_{k-1})-(a_0+a_1+\dots+a_k)=0,
\end{align*}
which implies $g_k(0)=0$.
Since $\langle \xi \rangle \approx 2^k$
for $\xi \in \mathrm{supp}\, \psi_k$,
\[
\|\psi_k\widehat{f}\|_{L^2}
=\|\langle \cdot \rangle^{-s}\langle \cdot \rangle^s
\psi_k\widehat{f}\|_{L^2}
\lesssim 2^{-ks}\|\psi_k\|_{L^{\infty}}
\|\langle \cdot \rangle^s \widehat{f}\|_{L^2}
\lesssim 2^{-ks}\|f\|_{W^s}.
\]
Our assumption $f(0)=0$ implies
$\int_{\R^n}\widehat{f}(\xi)\, d\xi=0$,
and consequently $\sum_{k \in \N_0}a_k=0$.
Thus, since $\sum_{j>k}\psi_j(\xi)=0$ if $|\xi| \le 2^k$,
it follows from Schwarz's inequality that
\begin{align*}
\|(a_0+a_1+\dots+a_k)\theta_{k}\|_{L^2}
&=\left|\sum_{j>k}a_j\right|2^{-kn/2}\|\theta\|_{L^2}
\lesssim
2^{-kn/2}\int_{|\xi| \ge 2^k}
|\widehat{f}(\xi)|\, d\xi
\\
&\le
2^{-kn/2}\left(\int_{|\xi| \ge 2^k}
\langle \xi \rangle^{-2s}\, d\xi\right)^{1/2}
\|\langle \cdot \rangle^s \widehat{f}\|_{L^2}
\approx 2^{-ks}\|f\|_{W^s}.
\end{align*}
Combining these estimates,
we have by Plancherel's theorem
\[
\|g_k\|_{L^2}
\approx \|\widehat{g_k}\|_{L^2}
\lesssim (2^{-ks}+2^{-(k-1)s}+2^{-ks})\|f\|_{W^s}
\approx 2^{-ks}\|f\|_{W^s}.
\]
The support condition of $\widehat{g_k}$
follows from that of $\psi_k$ and $\theta_k$.
The proof is complete.
\end{proof}

%%%==================================================================
\begin{lem}\label{Sobolev-estimate}
Let $s, \widetilde{s} \ge 0$ be such that
$\widetilde{s}<s$,
$\widetilde{s}<N$,
and $s>n/2$,
where $N=[n/2]+1$.
Assume that $\sigma \in C^{N}(\R^n \setminus \{0\})$
is a homogeneous function of degree $0$.
Then
\[
\|\sigma f \|_{W^{\widetilde{s}}}
\lesssim \| f \|_{W^s}
\]
for all $f  \in W^s(\R^n)$
satisfying $f(0)=0$ and
$\mathrm{supp}\, f \subset \{|x| \le 2\}$.
\end{lem}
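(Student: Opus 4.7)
The plan is to combine the decomposition from Lemma~\ref{lem-decomposition} with a rescaling that exploits the homogeneity of $\sigma$, reducing the estimate to a uniform Fourier-analytic bound.

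First, decompose $f=\sum_{k\ge 0}g_k$ by Lemma~\ref{lem-decomposition}, so that $g_k(0)=0$, $\widehat{g_k}$ is supported in a dyadic shell of scale $\approx 2^k$, and $\|g_k\|_{L^2}\lesssim 2^{-ks}\|f\|_{W^s}$. The aim is to show
\[
\|\sigma g_k\|_{W^{\widetilde s}}\lesssim 2^{k(\widetilde s-s)}\|f\|_{W^s},
\]
which sums in $k\ge 0$ since $\widetilde s<s$. Setting $h_k(y)=g_k(2^{-k}y)$, the homogeneity of $\sigma$ of degree $0$ gives $\sigma(x)g_k(x)=(\sigma h_k)(2^k x)$, and the scaling law for $W^{\widetilde s}$ yields $\|\sigma g_k\|_{W^{\widetilde s}}\lesssim 2^{k\widetilde s-kn/2}\|\sigma h_k\|_{W^{\widetilde s}}$. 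Since $\|h_k\|_{L^2}=2^{kn/2}\|g_k\|_{L^2}\lesssim 2^{k(n/2-s)}\|f\|_{W^s}$, the problem reduces to the uniform estimate
\[
\|\sigma h\|_{W^{\widetilde s}}\lesssim\|h\|_{L^2}
\]
for every Schwartz $h$ with $\widehat h$ supported in $\{|\eta|\le 4\}$ and $h(0)=0$.

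For this reduced claim, the low-frequency piece $\int_{|\xi|\le 8}\langle\xi\rangle^{2\widetilde s}|\widehat{\sigma h}(\xi)|^2\,d\xi$ is at most a constant times $\|\sigma h\|_{L^2}^2\le\|\sigma\|_\infty^2\|h\|_{L^2}^2$. For the tail $|\xi|>8$, I would use that $\widehat\sigma$ is smooth on $\R^n\setminus\{0\}$, homogeneous of degree $-n$ there, with $|\partial_\zeta\widehat\sigma(\zeta)|\lesssim|\zeta|^{-n-1}$; then $\widehat{\sigma h}(\xi)=(2\pi)^{-n}\int\widehat\sigma(\xi-\eta)\widehat h(\eta)\,d\eta$ holds pointwise (since $\xi-\eta$ stays away from $0$), and subtracting $\widehat\sigma(\xi)\int\widehat h(\eta)\,d\eta=(2\pi)^n\widehat\sigma(\xi)h(0)=0$ gives
\[
\widehat{\sigma h}(\xi)=(2\pi)^{-n}\int\bigl[\widehat\sigma(\xi-\eta)-\widehat\sigma(\xi)\bigr]\widehat h(\eta)\,d\eta,\qquad|\xi|>8.
\]
A mean-value estimate on the bracket then yields $|\widehat{\sigma h}(\xi)|\lesssim|\xi|^{-n-1}\|h\|_{L^2}$, so
\[
\int_{|\xi|>8}\langle\xi\rangle^{2\widetilde s}|\widehat{\sigma h}(\xi)|^2\,d\xi\lesssim\|h\|_{L^2}^2\int_{|\xi|>8}|\xi|^{2\widetilde s-2n-2}\,d\xi,
\]
and the last integral converges exactly when $\widetilde s<(n+2)/2$, which is ensured by $\widetilde s<N=[n/2]+1\le n/2+1$.

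The main obstacle is justifying the derivative bound $|\partial_\zeta\widehat\sigma(\zeta)|\lesssim|\zeta|^{-n-1}$ from the hypothesis $\sigma\in C^N(\R^n\setminus\{0\})$. I would handle this by decomposing $\sigma$ into dyadic annular pieces via a smooth cutoff and estimating each Fourier transform by integration by parts, using the $N$ available derivatives of $\sigma$; alternatively, a spherical-harmonic expansion of $\sigma|_{S^{n-1}}$ combined with decay of the coefficients (from $\sigma\in C^N$) and the explicit Fourier transforms of $Y_{l,m}(x/|x|)$ delivers the same bound.
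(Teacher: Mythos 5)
Your outer structure---the decomposition $f=\sum_k g_k$ via Lemma~\ref{lem-decomposition} and the rescaling $\sigma(x)g_k(x)=(\sigma h_k)(2^kx)$ with $h_k=g_k(2^{-k}\cdot)$---is correct, and it is a genuinely different reduction from the paper's, which decomposes \emph{both} $\sigma=\sum_{j\ge 0}\sigma_j$ and $f=\sum_k g_k$, bounds each $\|\sigma_jg_k\|_{W^{\widetilde s}}$ by interpolating an $L^2$ bound against a $W^N$ bound obtained from the Leibniz rule, and applies the mean-value theorem to $g_k$, never to $\widehat\sigma$. The step you flag as the ``main obstacle'' is, however, fatal: the bound $|\partial_\zeta\widehat\sigma(\zeta)|\lesssim|\zeta|^{-n-1}$ does \emph{not} follow from $\sigma\in C^N(\R^n\setminus\{0\})$ with $N=[n/2]+1$, and neither of the remedies you sketch can produce it. For the dyadic one, $\sigma_j(x)=(\sigma\psi)(2^{-j}x)$ by homogeneity and $N$ integrations by parts give $|\nabla\widehat{\sigma_j}(\zeta)|\lesssim 2^{j(n+1)}(1+2^j|\zeta|)^{-N}$; summing at $|\zeta|\approx 1$ produces $\sum_{j>0}2^{j(n+1-N)}$, which diverges because $N\le n<n+1$, and no cancellation between annuli is available to rescue a pointwise estimate. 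For the spherical-harmonic one, the Bochner--Hecke constants $\gamma_l$ in $\calF[Y_l(\cdot/|\cdot|)]=\gamma_l\,Y_l(\zeta/|\zeta|)|\zeta|^{-n}$ satisfy $|\gamma_l|\sim l^{n/2}$ (they are the reciprocals of the \emph{decaying} constants that make higher-order Riesz transforms $L^2$-bounded), so from $\sigma|_{S^{n-1}}\in C^N\subset H^N(S^{n-1})$ one obtains only $H^{N-n/2}(S^{n-1})\subset H^1(S^{n-1})$ regularity for the angular part of $\widehat\sigma$; on an $(n-1)$-sphere this is far below the Lipschitz threshold $(n+1)/2$, and for $n\ge 3$ it is not even enough to give boundedness. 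This is the same deficit---$N\le n<n+1$---that prevents a H\"ormander-class multiplier with only $[n/2]+1$ derivatives from having a pointwise $C^1$ kernel.

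There is also a structural loss in the reduction itself: your reduced claim $\|\sigma h\|_{W^{\widetilde s}}\lesssim\|h\|_{L^2}$ is asserted for \emph{every} band-limited $h$ with $h(0)=0$, but such $h$ is not compactly supported, so the hypothesis $\supp f\subset\{|x|\le 2\}$---which the paper uses essentially, after normalizing to $\supp f\subset\{|x|\le 1\}$, to restrict the annular decomposition of $\sigma$ to fine scales $j\ge 0$ where $\sigma_j$ has small support and bounded derivatives---has been discarded. Controlling $\widehat{\sigma h}$ for such $h$ forces one to understand $\widehat\sigma$ uniformly near the origin, exactly where it is singular, and I would not expect the reduced claim to hold in this generality. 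The clean repair is to keep the compact support, decompose $\sigma$ as well, and estimate $\sigma_jg_k$ in physical space as the paper does, avoiding $\widehat\sigma$ entirely.
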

%%%==================================================================
\begin{proof}
By a change of variables,
we may assume that
$\mathrm{supp}\, f \subset \{|x| \le 1\}$.
Moreover, we can assume that $\|f\|_{W^s}=1$.
Let $\psi \in \calS(\R^n)$
be as in \eqref{LP-homogeneous} with $d=n$.
For $x \in \R^n \setminus \{0\}$
with $|x| \le 1$,
we decompose $\sigma$ as
\[
\sigma(x)
=\sum_{j \in \N_0}
\sigma(x)\psi(2^jx)
=\sum_{j \in \N_0}
\sigma(2^jx)\psi(2^jx)
=\sum_{j \in \N_0}\sigma_j(x),
\]
where we used the homogeneity of $\sigma$
and $\sigma_j(x)=\sigma(2^jx)\psi(2^jx)$.
Then, by Lemma \ref{lem-decomposition},
$\sigma f$ can be written as
\[
\sigma(x)f(x)
=\sum_{j \in \N_0}\sum_{k \in \N_0}
\sigma_j(x)g_k(x),
\]
where $g_k(0)=0$,
$\|g_k\|_{L^2} \lesssim 2^{-ks}$,
$\mathrm{supp}\, \widehat{g_0} \subset \{|\xi| \le 2\}$,
and
$\mathrm{supp}\, \widehat{g_k} \subset \{2^{k-1} \le |\xi| \le 2^{k+1}\}$,
$k \ge 1$.
We shall prove that
\begin{equation}\label{Sobolev-goal-1}
\|\sigma_j g_k\|_{W^{\widetilde{s}}}
\lesssim
\begin{cases}
2^{-k(s-\widetilde{s})},
&j \le k
\\
2^{-j(n/2+1-\widetilde{s})}2^{k(n/2+1-s)},
&j>k .
\end{cases}
\end{equation}
Once this is proved,
we obtain the desired result.
In fact,
since
$n/2+1-\widetilde{s}>0$
and $s-\widetilde{s}>0$,
\begin{align*}
\|\sigma f\|_{W^{\widetilde{s}}}
&\le \left(\sum_{j \le k}+\sum_{j>k} \right)
\|\sigma_j g_k\|_{W^{\widetilde{s}}}
\\
&\lesssim
\sum_{j \le k}
2^{-k(s-\widetilde{s})}
+\sum_{j >k}
2^{-j(n/2+1-\widetilde{s})}2^{k(n/2+1-s)}
\lesssim
\sum_{k \ge 0}
(k+1)2^{-k(s-\widetilde{s})}
<\infty .
\end{align*}

Recall that $N=[n/2]+1$ and $0 \le \widetilde{s}<N$,
and take $0 \le \theta <1$ satisfying
$\widetilde{s}=\theta N$.
Then, in order to prove \eqref{Sobolev-goal-1},
since
\begin{align*}
\|\sigma_j g_k\|_{W^{\widetilde{s}}}
&=\|\langle \cdot \rangle^{\widetilde{s}}
\, \widehat{\sigma_j g_k}\|_{L^2}
\\
&\le \|\widehat{\sigma_j g_k}\|_{L^2}^{1-\theta}
\|\langle \cdot \rangle^N \, \widehat{\sigma_j g_k}\|_{L^2}^\theta
\approx
\|\sigma_j g_k\|_{L^2}^{1-\theta}
\bigg(\sum_{|\alpha| \le N}
\|\partial^{\alpha}(\sigma_j g_k)\|_{L^2}
\bigg)^{\theta},
\end{align*}
it is sufficient to show
\begin{equation}\label{Sobolev-goal-2}
\|\partial^{\alpha}(\sigma_j g_k)\|_{L^2}
\lesssim
\begin{cases}
2^{k|\alpha|}2^{-ks},
&j \le k
\\
2^{j|\alpha|}(2^{-j(n/2+1)}2^{k(n/2+1-s)}),
&j>k
\end{cases}
\end{equation}
for $|\alpha| \le N$.
By Leibniz's rule,
\begin{equation}\label{Leibniz}
\partial^{\alpha}(\sigma_j(x) g_k(x))
=\sum_{\beta \le \alpha}
\binom{\alpha}{\beta}
2^{j|\beta|}
[\partial^{\beta}(\sigma\psi)](2^jx)
\partial^{\alpha-\beta}g_k(x),
\end{equation}
and note that $\sigma \psi$
is not singular at the origin
because of the support condition of $\psi$.
Using the facts
$\mathrm{supp}\, \widehat{g_k}
\subset \{|\xi| \le 2^{k+1}\}$
and $\|g_k\|_{L^2} \lesssim 2^{-ks}$,
$k \ge 0$,
we have
\begin{equation}\label{L2Lq-estimate}
\|\partial^{\gamma}g_k\|_{L^{q}}
\lesssim 2^{k(|\gamma|+(n/2-n/q))}\|g_k\|_{L^2}
\lesssim  2^{k(|\gamma|+(n/2-n/q)-s)}
\end{equation}
for $2 \le q \le \infty$
(see \cite[Remark 1.3.2/1]{Triebel} for the first inequality).
In the case $j \le k$,
it follows from \eqref{L2Lq-estimate} with $q=2$ that
\begin{align*}
\|2^{j|\beta|}
[\partial^{\beta}(\sigma\psi)](2^j \cdot)
\partial^{\alpha-\beta}g_k\|_{L^2}
&\le
2^{j|\beta|}
\|[\partial^{\beta}(\sigma\psi)](2^j \cdot)\|_{L^{\infty}}
\|\partial^{\alpha-\beta}g_k\|_{L^2}
\\
&\lesssim
2^{j|\beta|}
2^{k(|\alpha-\beta|-s)}
\le 2^{k|\beta|}
2^{k(|\alpha-\beta|-s)}=2^{k(|\alpha|-s)}.
\end{align*}
In the case $j>k$,
we consider the two cases
$\beta=\alpha$ and $\beta \neq \alpha$
in the right hand side of \eqref{Leibniz}.
If $\beta=\alpha$,
using the fact $g_k(0)=0$,
we have by \eqref{L2Lq-estimate}
with $q=\infty$
\begin{align*}
\|2^{j|\alpha|}
[\partial^{\alpha}(\sigma\psi)](2^j x)
g_k(x)\|_{L^2_x}
&=2^{j|\alpha|}
\|[\partial^{\alpha}(\sigma\psi)](2^j x)
(g_k(x)-g_k(0))\|_{L^2_x}
\\
&=2^{j|\alpha|}
\Big\|[\partial^{\alpha}(\sigma\psi)](2^j x)
\Big(\int_0^1 x \cdot \nabla g_k(tx)\, dt\Big)\Big\|_{L^2_x}
\\
&\le
2^{j|\alpha|}
\|[\partial^{\alpha}(\sigma\psi)](2^j x)|x|\|_{L^2_x}
\|\nabla g_k\|_{L^\infty}
\\
&\lesssim
2^{j|\alpha|}2^{-j(n/2+1)}2^{k(1+n/2-s)}.
\end{align*}
If $\beta \neq \alpha$,
since $|\alpha-\beta| \ge 1$ and $j>k$,
we have by \eqref{L2Lq-estimate} with $q=\infty$
\begin{align*}
&\|2^{j|\beta|}
[\partial^{\beta}(\sigma\psi)](2^j \cdot)
\partial^{\alpha-\beta}g_k\|_{L^2}
\le
2^{j|\beta|}
\|[\partial^{\beta}(\sigma\psi)](2^j \cdot)\|_{L^{2}}
\|\partial^{\alpha-\beta}g_k\|_{L^\infty}
\\
&\lesssim
2^{j|\beta|}2^{-jn/2}
2^{k(|\alpha-\beta|+n/2-s)}
=2^{j|\beta|}2^{-jn/2}
2^{k(|\alpha-\beta|-1)}
2^{k(1+n/2-s)}
\\
&\le
2^{j|\beta|}2^{-jn/2}
2^{j(|\alpha-\beta|-1)}
2^{k(1+n/2-s)}
=2^{j|\alpha|}2^{-j(n/2+1)}2^{k(1+n/2-s)}.
\end{align*}
Hence, we obtain \eqref{Sobolev-goal-2}.
\end{proof}

%%%==================================================================
\subsection{Proof of Theorem \ref{mainthm2}}
\label{section5.2}

We first consider the boundedness from
$L^2 \times BMO$ to $L^2$.
We use the following decomposition of functions in $BMO$
by Fefferman-Stein \cite[Theorem 3]{Fefferman-Stein}.
For $g\in L^2 \cap BMO$, there exist 
$g_0, g_1, \dots, g_n \in L^2 \cap L^{\infty}$ such that
\begin{equation}\label{BMO-Characterization}
g=g_0+\sum_{k=1}^n R_k(g_k),
\qquad
\sum_{k=0}^n\|g_k\|_{L^{\infty}}
\lesssim \|g\|_{BMO},
\end{equation}
where $R_k$, $1 \le k \le n$, are the Riesz transforms
defined by
\[
R_kg(x)
=\frac{1}{(2\pi)^n}
\int_{\R^n}e^{ix\cdot\xi}
\left(-i\frac{\xi_k}{|\xi|}\right)
\widehat{g}(\xi)\, d\xi
\]
and $\xi=(\xi_1,\dots,\xi_n) \in \R^n$.
As for the assertion to take $g$ and $g_j$ in $L^2$, 
see \cite{Miyachi}.

Let $s_1>0$ and $s_2>n/2$.
Assume that
$\sup_{j \in \Z}\|m_j\|_{W^{(s_1,s_2)}_2}<\infty$
and  $m(\xi_1,0)=0$, $\xi_1 \neq 0$.
By \eqref{BMO-Characterization},
we can write
\begin{align*}
T_m(f_1,f_2)
&=T_m(f_1,f_{2,0})
+\sum_{k=1}^n T_m(f_1,R_k(f_{2,k}))
\\
&=T_{m}(f_1,f_{2,0})
+\sum_{k=1}^n T_{m^{(k)}}(f_1,f_{2,k})
\end{align*}
with
\[
m^{(k)}(\xi_1,\xi_2)
=\left(-i\frac{\xi_{2,k}}{|\xi_2|}\right)
m(\xi_1,\xi_2)
\]
for $f_i \in \calS$, $i=1,2$,
where $\sum_{k=0}^n\|f_{2,k}\|_{L^{\infty}} \lesssim \|f_2\|_{BMO}$.
Then, it follows from Theorem \ref{mainthm} that
\begin{align*}
\|T_m(f_1,f_2)\|_{L^2}
&\lesssim
\left(\sup_{j \in \Z}\|m_j\|_{W^{(s_1,s_2)}_2}\right)
\|f_1\|_{L^2}\|f_{2,0}\|_{L^{\infty}}
\\
&\qquad \qquad 
+\sum_{k=1}^n
\left(\sup_{j \in \Z}
\|(m^{(k)})_j\|_{W^{(s_1,s_2-\epsilon)}_2}\right)
\|f_1\|_{L^2}\|f_{2,k}\|_{L^{\infty}}
\\
&\lesssim
\left(\sup_{j \in \Z}\|m_j\|_{W^{(s_1,s_2)}_2}
+\sum_{k=1}^n\sup_{j \in \Z}
\|(m^{(k)})_j\|_{W^{(s_1,s_2-\epsilon)}_2}\right)
\|f_1\|_{L^2}\|f_2\|_{BMO},
\end{align*}
where $\epsilon$ is a positive number
such that $n/2<s_2-\epsilon<[n/2]+1$,
$m_j$ is defined by \eqref{def-mj},
and $(m^{(k)})_j$ is given by
\begin{align*}
(m^{(k)})_j(\xi_1,\xi_2)
&=m^{(k)}(2^j\xi_1,2^j\xi_2)
\Psi(\xi_1,\xi_2)
\\
&=\left(-i\frac{\xi_{2,k}}{|\xi_2|}\right)
m(2^j\xi_1,2^j\xi_2)\Psi(\xi_1,\xi_2)
=\left(-i\frac{\xi_{2,k}}{|\xi_2|}\right)
m_j(\xi_1,\xi_2).
\end{align*}
Since
$\langle D_{\xi_1} \rangle^{s_1}m_j(\xi_1,0)=0$
and
$\mathrm{supp}\,
\langle D_{\xi_1} \rangle^{s_1}m_j(\xi_1,\cdot)
\subset \{|\xi_2| \le 2\}$ for each $\xi_1$,
we have by Lemma \ref{Sobolev-estimate}
\begin{align*}
\|\langle D_{\xi_1} \rangle^{s_1}
\langle D_{\xi_2} \rangle^{s_2-\epsilon}
(m^{(k)})_j(\xi_1,\xi_2)\|_{L^2_{\xi_2}}
&=\frac{1}{(2\pi)^{n/2}}
\bigg\|\Big(-i\frac{\xi_{2,k}}{|\xi_2|}\Big)
[\langle D_{\xi_1} \rangle^{s_1}
m_j](\xi_1,\xi_2)\bigg\|_{W^{s_2-\epsilon}_{\xi_2}}
\\
&\lesssim
\|\langle D_{\xi_1} \rangle^{s_1}
m_j(\xi_1,\xi_2)\|_{W^{s_2}_{\xi_2}}
\le
\|m_j\|_{W^{(s_1,s_2)}_2}
\end{align*}
for each $\xi_1$.
Hence,
\[
\sup_{j \in \Z}
\|(m^{(k)})_j\|_{W^{(s_1,s_2-\epsilon)}_2}
\lesssim
\sup_{j \in \Z}
\|m_j\|_{W^{(s_1,s_2)}_2},
\]
which gives the first assertion of Theorem \ref{mainthm2}.

We next consider the boundedness from $L^2 \times L^2$ to $H^1$.
Let $s_1>0$ and $s_2>n/2$.
Assume that
$\sup_{j \in \Z}\|m_j\|_{W^{(s_1,s_2)}_2}<\infty$
and  $m(\xi_1,-\xi_1)=0$, $\xi_1 \neq 0$.
We take $s_1'>0$ and $s_2'>n/2$
for the inequality \eqref{m*2} to hold.
Since
\[
m^{*2}(\xi_1,0)
=m(\xi_1,-\xi_1)=0,
\]
where $m^{*2}$ is defined by \eqref{dual-form},
it follows from duality,
the first assertion of Theorem \ref{mainthm2},
and \eqref{m*2} that
\[
\|T_m\|_{L^2 \times L^2 \to H^1}
=\|T_{m^{*2}}\|_{L^2 \times BMO \to L^2}
\lesssim
\sup_{j \in \Z}\|(m^{*2})_j\|_{W^{(s_1', s_2')}_2}
\lesssim
\sup_{j \in \Z}\|m_j\|_{W^{(s_1, s_2)}_2},
\]
which is the second assertion of Theorem \ref{mainthm2}.

%%%==================================================================
%%%==================================================================
\appendix\section{}\label{appendix}

%%%==================================================================
\begin{proof}[Proof of Lemma \ref{TMJ-pointwise}]
By symmetry, it is sufficient to consider the latter inequality.
We write the bilinear operator $T_{m(2^{-j}\cdot)}$ 
as a combination of linear operators in the following form:
\begin{align*}
T_{m(2^{-j}\cdot)}(f_1,f_2)(x)
&=\frac{1}{(2\pi)^n}
\int_{\R^n}e^{ix\cdot\xi_2}\sigma_{f_1,j}(x,2^{-j}\xi_2)
\widehat{f_2}(\xi_2)\, d\xi_2
\\
&=\sigma_{f_1,j}(X,2^{-j}D)f_2(x)
\end{align*}
with
\[
\sigma_{f_1,j}(x,\xi_2)
=\frac{1}{(2\pi)^n}\int_{\R^n}
e^{ix\cdot\xi_1}m(2^{-j}\xi_1,\xi_2)
\widehat{f_1}(\xi_1)\, d\xi_1.
\]
By using the identity
\[
\sigma_{f_1,j}(X,2^{-j}D)f_2(x)
=\int_{\R^n}2^{jn} \calF_2^{-1}\sigma_{f_1,j}(x,2^j(x-y_2))f_2(y_2)\, dy_2,
\]
where $\calF_2^{-1}\sigma_{f_1,j}(x,y_2)$
is the partial inverse Fourier transform
of $\sigma_{f_1,j}(x,\xi_2)$ with respect to the $\xi_2$-variable,
it follows from Schwarz's inequality that
the right hand side of the above is estimated by
\begin{align*}
&\Big( \int_{\R^n}
|(1+2^j|y_2|)^s \calF_2^{-1}\sigma_{f_1,j}(x,2^{j}y_2)|^{2}
2^{jn} dy_2
\Big)^{1/2}
\Big(
\int_{\R^n}\Big|\frac{f_2(y_2)}{(1+2^j|x-y_2|)^s}\Big|^2
2^{jn} dy_2 \Big)^{1/2}
\\
&\approx
\Big( \int_{\R^n}
|\calF_2^{-1}\langle D_{\xi_2} \rangle^{s}
\sigma_{f_1,j}(x,y_2)|^{2} \, dy_2
\Big)^{1/2}
(\zeta_j*|f_2|^2)(x)^{1/2},
\end{align*}
where $\zeta_j(y_2)=2^{jn}(1+2^j|y_2|)^{-2s}$.
Then, Plancherel's theorem gives the desired inequality.
\end{proof}
%%%==================================================================

\begin{proof}[Proof of Lemma \ref{CJM-Carleson}]
We first recall the definition and basic fact of Carleson measures.
A positive measure $\nu$ on $\R^{n+1}_+=\R^n \times (0,\infty)$
is said to be a Carleson measure
if there exists a constant $C>0$ such that
\[
\nu(Q \times (0,\ell(Q))) \le C|Q|
\qquad
\text{for all cubes $Q$ in $\R^n$},
\]
where $\ell(Q)$ is the side length of $Q$.
The infimum of the possible values of the constant $C$ is
called the Carleson constant of $\nu$ and is denoted by $\|\nu\|$.
If $\nu$ is a Carleson measure, then
\[
\int_{\R^{n+1}_+}|F(x,t)|\, d\nu(x,t)
\lesssim \|\nu\|\int_{\R^n}
F^{\ast}(x)\, dx,
\]
where $F^*$ is the nontangential maximal function of $F$,
which is defined by $F^*(x)=\sup_{|x-y|<t}|F(y,t)|$
(see, e.g., \cite[Corollary  3.3.6]{Grafakos-2}).

In \cite[Lemma 3.1]{Grafakos-Miyachi-Tomita},
it was proved that
\begin{equation}\label{Carleson-def}
d\mu=\sum_{j \in \Z}(\zeta_j \ast |\psi(2^{-j}D)b|^2)(x)
\otimes \delta_{2^{-j}}(t)
\end{equation}
is a Carleson measure with Carleson constant
$\|\mu\| \lesssim \|b\|_{BMO}^2$.
Here the meaning of the notation \eqref{Carleson-def}
is that $\mu$ satisfies
\[
\int_{\R^{n+1}_+}F(x,t)\, d\mu(x,t)
=\sum_{j \in \Z}\int_{\R^n}
F(x,2^{-j})(\zeta_j
\ast |\psi(2^{-j}D)b|^2)(x)\, dx
\]
for all nonnegative Borel measurable functions $F$ on $\R^{n+1}_+$.
Hence, by the inequality
$\sup _{|x-y|<2^{-j}}(\zeta_j \ast |f|)(y) \lesssim Mf(x)$,
where $M$ is the Hardy-Littlewood maximal operator
(see, e.g., \cite[Theorem 2.1.10]{Grafakos-1}),
we have
\begin{align*}
&\left(\sum_{j \in \Z}
\int_{\R^n}
(\zeta_j*|f|)(x)^{2}
(\zeta_j * |\psi(2^{-j}D)g|^2)(x)\, dx\right)^{1/2}
\lesssim
\|Mf\|_{L^2}\|g\|_{BMO}.
\end{align*}
Therefore, the boundedness of $M$ on $L^2$
gives the desired result.
\end{proof}
%%%==================================================================

%%%==================================================================
%%%==================================================================

%%%==================================================================

\begin{thebibliography}{20}
\bibitem{Benedek-Panzone}
A. Benedek and R. Panzone,
{The space $L^p$, with mixed norm},
Duke Math. J. 28 (1961), 301--324.

\bibitem{CLMS}
R. Coifman, P.-L. Lions, Y. Meyer, and S. Semmes,
{Compensated compactness and Hardy spaces},
J. Math. Pures Appl. 72 (1993), 247--286.

\bibitem{CM-1}
R. Coifman and Y. Meyer,
{Au del\`a des op\'erateurs pseudo-diff\'erentiels},
Ast\'erisque 57 (1978), 1--185.

\bibitem{CM-2}
R. Coifman and Y. Meyer,
{Nonlinear harmonic analysis, operator theory and P.D.E.},
in Beijing lectures in harmonic analysis (Beijing, 1984),
3--45, Ann. of Math. Stud. 112, Princeton Univ. Press, Princeton, NJ,
1986.

\bibitem{Fefferman-Stein}
C. Fefferman and E. M. Stein,
{$H^p$ spaces of several variables},
Acta Math. 129 (1972), 137--193.

\bibitem{Grafakos-1}
L. Grafakos,
{Classical Fourier Analysis},
Second edition, Springer, New York, 2008.

\bibitem{Grafakos-2}
L. Grafakos,
{Modern Fourier Analysis},
Second edition, Springer, New York, 2009.

\bibitem{Grafakos-He-Honzik}
L. Grafakos, D. He, and P. Honz\'ik,
{The H\"ormander multiplier theorem, II:
The bilinear local $L^2$ case},
Math. Z. 289 (2018), 875--887.

\bibitem{Grafakos-Miyachi-Tomita}
L. Grafakos, A. Miyachi, and N. Tomita,
{On multilinear Fourier multipliers of limited smoothness},
Canad. J. Math. 65 (2013), 299--330.

\bibitem{GNNS}
L. Grafakos, S. Nakamura, H. V. Nguyen, and Y. Sawano,
{Multiplier conditions for boundedness into Hardy spaces},
Ann. Inst. Fourier (Grenoble), to appear.

\bibitem{Grafakos-Si}
L. Grafakos and Z. Si,
{The H\"ormander multiplier theorem for multilinear operators},
J. Reine Angew. Math. 668 (2012), 133--147.

\bibitem{Grafakos-Torres}
L. Grafakos and R. Torres,
{Multilinear Calder\'on-Zygmund theory},
Adv. Math. 165 (2002), 124--164.

\bibitem{Kenig-Stein}
C. Kenig and E. M. Stein, 
{Multilinear estimates and fractional integration}, 
Math. Res. Lett. 6 (1999), 1--15.

\bibitem{Miyachi}
A. Miyachi,
{Some Littlewood-Paley type inequalities and their application to
Fefferman-Stein decomposition of $BMO$},
Indiana Univ. Math. J. 39 (1990), 563--583.

\bibitem{Miyachi-Tomita-RMI}
A. Miyachi and N. Tomita,
{Minimal smoothness conditions for bilinear Fourier multipliers},
Rev. Mat. Iberoam.  29 (2013), 495--530.

\bibitem{Miyachi-Tomita-TMJ}
A. Miyachi and N. Tomita,
{Boundedness criterion for bilinear Fourier multiplier operators},
Tohoku Math. J. 66 (2014), 55--76.

\bibitem{Park}
B. J. Park,
{Equivalence of (quasi-)norms on a vector-valued function space and
its applications to multilinear operators},
Indiana Univ. Math. J., to appear.

\bibitem{Stein}
E. M. Stein,
{Harmonic Analysis, Real Variable Methods,
Orthogonality, and Oscillatory Integrals},
Princeton University Press, Princeton, NJ, 1993.

\bibitem{Sugimoto}
M. Sugimoto,
{Pseudo-differential operators on Besov spaces},
Tsukuba J. Math. 12 (1988), 43--63.

\bibitem{Tomita}
N. Tomita,
{A H\"ormander type multiplier theorem for multilinear operators},
J. Funct. Anal. 259 (2010), 2028--2044.

\bibitem{Triebel}
H. Triebel,
{Theory of Function Spaces},
Birkh\"auser Verlag,
Basel, 1983.
\end{thebibliography}
\end{document}